\newtheorem{theorem}{Theorem}
\newtheorem{proposition}{Proposition}
\newtheorem{corollary}{Corollary}
\newtheorem{lemma}{Lemma}
\newtheorem{problem}{Problem}
\newtheorem{notation}{Notational convention}
\newtheorem{claim}{Claim}
\newtheorem{remark}{Remark}
\numberwithin{theorem}{section}
\numberwithin{lemma}{section}
\numberwithin{proposition}{section}
\numberwithin{corollary}{section}
\numberwithin{claim}{section}
\newcommand{\thmref}[1]{Theorem~\ref{thm:#1}} 
\newcommand{\lemref}[1]{Lemma~\ref{lem:#1}} 
\newcommand{\propref}[1]{Proposition~\ref{prop:#1}} 
\newcommand{\remref}[1]{Remark~\ref{rem:#1}} 
\newcommand{\secref}[1]{Section~\ref{sec:#1}} 
\newcommand{\eqnref}[1]{(\ref{eq:#1})} 
\def\be{\begin{equation} }
\def\ee{ \end{equation}}
\def\ben{\begin{equation*}}
\def\een{\end{equation*}}
\def\bea{\begin{eqnarray}}
\def\eea{\end{eqnarray}}
\def\ee{\end{eqnarray}}
\def\bean{\begin{eqnarray*}}
\def\eean{\end{eqnarray*}}
\newcommand\ignore[1]{}
\def\R{\mathbb{R}} 
\def\Z{\mathbb{Z}} 
\def\N{\mathbb{N}} 
\newcommand{\Ex}[1]{\mathbb{E}\left[#1\right]} 
\newcommand{\Prp}[2]{\mathbb{P}_{#1}\left(#2\right)} 
\newcommand{\Exp}[2]{\mathbb{E}_{#1}\left[#2\right]} 
\newcommand{\Prpwo}[1]{\mathbb{P}_{#1}} 
\newcommand{\Prwo}{\mathbb{P}} 
\newcommand{\Ind}[1]{\mathbb{I}_{#1}} 
\renewcommand{\Pr}[1]{\mathbb{P}\left(#1\right)} 
\newcommand{\bigoh}[1]{O\left(#1\right)}
\newcommand{\theita}[1]{\Theta\left(#1\right)}
\def\sA{\mathcal{A}}
\def\sF{\mathcal{F}}
\def\sG{\mathcal{G}}\def\sH{\mathcal{H}}
\def\sO{\mathcal{O}}
\def\sP{\mathcal{P}}
\newcommand\QED{\ifhmode\allowbreak\else\nobreak\fi
\quad\nobreak$\Box$\medbreak}
\newcommand{\proofstart}{\par\noindent\sl Proof:\rm\enspace}
\newcommand{\proofend}{\QED\par}
\newenvironment{proof}{\proofstart}{\proofend}
\def\qstat{{\sf q}}
\def\Tmix{{\rm T}_{\rm mix}}
\def\Thit{{\rm T}_{\rm hit}}
\def\bG{{\bf G}}
\def\bbD{\mathbb{D}}
\def\bV{{\bf V}}
\def\Co{{\sf Co}}
\begin{document}

\title{On the coalescence time of reversible random walks}
\author{Roberto Imbuzeiro Oliveira\thanks{IMPA, Rio de Janeiro, RJ, Brazil, 22430-040.  Work supported by a {\em Bolsa de Produtividade em Pesquisa} and by a {\em Pronex} grant from CNPq, Brazil.    }} \maketitle
\begin{abstract}Consider a system of coalescing random walks where each individual performs random walk over a finite graph $\bG$, or (more generally) evolves according to some reversible Markov chain generator $Q$. Let $C$ be the first time at which all walkers have coalesced into a single cluster. $C$ is closely related to the consensus time of the voter model for this $\bG$ or $Q$.

We prove that the expected value of $C$ is at most a constant
multiple of the largest hitting time of an element in the state
space. This solves a problem posed by Aldous and Fill and gives
sharp bounds in many examples, including all vertex-transitive
graphs. We also obtain results on the expected time until only
$k\geq 2$ clusters remain. Our proof tools include a new exponential
inequality for the meeting time of a reversible Markov chain and a
deterministic trajectory, which we believe to be of independent
interest.\end{abstract}

{\bf Keywords:} coalescing random walks, voter model, hitting time.

{\bf MSC 2010 Classification:} 60J27, 60K35 (primary), 60C05
(secondary).

\section{Introduction}

Consider a system of continuous-time random walks on a finite
connected graph $\bG$, with a walker starting from each vertex
of $\bG$. Let the walkers evolve independently, except that
any two that occupy the same vertex of $\bG$ at a given time {\em
coalesce} into one (this is made precise in \secref{CRW}).

As time goes by, larger and larger coalesced clusters emerge, until
at a certain random time $C$ only one cluster remains. The question we address here is: how large
can $C$ be in terms of other parameters of $\bG$? This is a natural question which has implications for the so-called {\em voter model on $\bG$}, discussed in \secref{voter} below.

It is instructive to consider what happens in the simple case of
$\bG=K_n$, the complete graph on $n$ vertices. An explicit
calculation \cite[Chapter 14, Sec. 3.3]{AldousFill_RWBook} shows
that:
\begin{equation}\label{eq:distcomplete}\frac{C}{n}\approx \sum_{i=1}^{+\infty}
\frac{Z_i}{i(i+1)}\mbox{, with $\{Z_i\}_{i\geq 1}$ i.i.d.
exponential random variables with mean $1$}.\end{equation} In
particular, $\Ex{C}\sim n$ as $n\to +\infty$. What is remarkable
about this is that any two of the walkers will take an expected time
$\sim n/2$ to meet and coalesce; the fact that we are dealing with
an {\em unbounded} number of particles only increases the expected time by a constant factor.

It is natural to ask what happens in more general graphs. This is closely related to the following problem, which was posed by Aldous and Fill in the mid-nineties.

\begin{problem}[Open problem 13, Chapter 14 of \cite{AldousFill_RWBook}]\label{problem:main} Prove that there exists a universal constant $K>0$ such that the expected value of $C$ satisfies
$$\Ex{C}\leq K\,\Thit^{\bG}$$
irrespective of initial conditions, where $\Thit^{\bG}$ is the
maximum expected hitting time of a vertex in $\bG$.\end{problem}

To see how this relates to our previous discussion, consider a vertex-transitive graph $\bG$. Proposition $5$ in
\cite[Chapter 14]{AldousFill_RWBook} implies that the maximum
expected meeting time of two walkers on $\bG$, denoted by ${\rm
T}_{\rm meet}^{\bG}$, actually equals $\Thit^{\bG}/2$. This implies that, if Problem 1 has a positive solution, all vertex-transitive graphs are like $K_n$ in that $\Ex{C}$ is at most a universal constant factor away from ${\rm T}_{\rm meet}^\bG$. A similar conclusion holds for the many other families of
graphs where ${\rm T}_{\rm meet}^{\bG}=\theita{\Thit^\bG}$ (eg. all
regular graphs with $\Thit^\bG=\bigoh{n}$). For more general graphs it is still true
that ${\rm T}_{\rm meet}^{\bG}\leq {\rm T}_{\rm hit}^{\bG}$, as
proven in the aforementioned Proposition (see also
\cite{Aldous_MeetingTimes}), and the Problem may be viewed as an
strengthening of this fact\footnote{There are graphs such as stars where $\Thit^\bG$ is much larger than
${\rm T}_{\rm meet}^{\bG}$ or $\Ex{C}$.}.

To the best of our knowledge, Problem 1 has remained open up to now.
The best known bound of this sort has an extra $\ln|\bV|$ factor; see Proposition $11$ in
\cite[Chapter 14]{AldousFill_RWBook}. Our main goal in this paper is to give a solution of Problem 1 in
the more general setting of reversible Markov chains.

Assume that
$Q$ is the generator of a reversible, irreducible, continuous-time
Markov chain over a finite set $\bV$. Given $v\in \bV$, let $H_v$ be
the hitting time of $v$, ie. the first time at which a trajectory of
$Q$ hits $v$. We define the following parameter of the chain:
\begin{equation}\label{eq:thit_Q}\Thit^Q\equiv
\max_{v,w\in\bV}\Exp{w}{H_v} = \mbox{largest expected hitting time
for $Q$}.\end{equation}

Define a system of coalescing random walks as in the case of graphs,
with the difference that each walker now evolves over $\bV$
according to $Q$. The following Theorem solves Problem 1.
\begin{theorem}\label{thm:main}There exists a universal constant $K>0$ such that, with $Q$ as above, for any $n\in\N\backslash\{0\}$ and for any $x^{(n)}=(x(1),\dots x(n))\in\bV^n$:
$$\Exp{x^{(n)}}{C}\leq K\,\Thit^Q.$$\end{theorem}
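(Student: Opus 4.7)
The plan is to use the exponential meeting-time inequality promised in the abstract as the main engine, and to bootstrap it into a bound on $\Ex{C}$ that avoids the $\log|\bV|$ factor of the classical estimate. Write $N_t$ for the number of surviving clusters at time $t$ and $T_m=\inf\{t\ge0:N_t\le m\}$, so that $C=T_1=\sum_{m\ge2}(T_{m-1}-T_m)$ almost surely. By the strong Markov property at $T_m$ it suffices to show a uniform bound of the form
\[
\sup_{\text{$m$-cluster configurations}}\Ex{T_{m-1}}\le\frac{K_0\,\Thit^Q}{m(m-1)},\qquad m\ge2,
\]
with a universal constant $K_0$, since then $\Ex{C}\le K_0\,\Thit^Q\sum_{m\ge2}1/(m(m-1))=K_0\,\Thit^Q$ uniformly in $n$ and $|\bV|$. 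This reduction is essential: it localizes the analysis to a single ``next merger'' question at each cluster count and avoids any union bound that grows with $n$. As a bonus, the same telescope gives $\Ex{T_k}=\bigoh{\Thit^Q/k}$ for every $k\ge 2$, which matches the side claim advertised in the abstract.

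I would first establish the exponential meeting-time inequality. For a reversible chain $X$ with generator $Q$, any starting state $x\in\bV$, and any deterministic trajectory $\gamma:[0,\infty)\to\bV$, the target is $\Pr{M>t}\le A\exp(-ct/\Thit^Q)$ for $M=\inf\{t\ge0:X_t=\gamma(t)\}$ and universal $A,c>0$. I would approach this by dividing the time axis into blocks of length $\Theta(\Thit^Q)$ and showing, via reversibility and the definition of $\Thit^Q$, that on each block the chain meets $\gamma$ with probability bounded away from zero, conditional on not having met earlier; iterating yields the claimed exponential tail. Applied to the product chain on $\bV\times\bV$ whose hitting time of the diagonal equals the meeting time $M_{ij}$ of two independent $Q$-walkers, the same argument produces an exponential tail for pairwise meeting times with the same rate constant.

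The heart of the proof, and the main obstacle, is the amplification step from a single pair to the worst case over $m$-cluster configurations. The target rate $m^2/\Thit^Q$ reproduces the complete-graph heuristic in \eqnref{distcomplete} -- $\Theta(m^2)$ pairs each with per-pair rate $\Theta(1/\Thit^Q)$ -- but making this rigorous for a general reversible chain is subtle because distinct pairs share walkers and hence have positively correlated meeting times, so a naive union bound leaks the $\log m$ factor one is trying to avoid. My first attempt would be a second-moment / Paley--Zygmund argument on $Z_t=|\{(i,j):i<j,\,M_{ij}\le t\}|$, using the exponential inequality to control both $\Ex{Z_t}$ and $\Ex{Z_t^2}$ uniformly in initial positions and dominating shared-walker correlations by conditioning on one walker's trajectory and re-applying the exponential inequality. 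If that falls short, an alternative route is a pigeonhole/partitioning scheme: among any $m$ distinct starting vertices some sub-collection must be ``close'' in the chain's intrinsic metric, forcing a meeting on the faster timescale; reversibility lets one make the notion of ``close'' precise via effective resistance. In either case, the crux is converting the exponential tail of a \emph{single} meeting time into a sharp \emph{aggregate} first-meeting bound uniformly over worst-case initial positions, and this is where I expect the paper's technical novelty to concentrate.
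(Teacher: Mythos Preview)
Your telescoping reduction is natural and your identification of the amplification step as the crux is correct, but the approach diverges from the paper's and has a real gap at precisely that crux.

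The gap is this. Your exponential tail $\Pr{M_{ij}>t}\le A\,e^{-ct/\Thit^Q}$ gives a useful lower bound on $\Pr{M_{ij}\le t}$ at the scale $t\sim\Thit^Q/m^2$ only if $A=1$; with any $A>1$ the bound is vacuous for $t\ll\Thit^Q$, and $A=1$ (as in the paper's \lemref{meetingtime}) requires a \emph{stationary} start. But after applying the strong Markov property at $T_m$ you land at an arbitrary $m$-cluster configuration, not a stationary one. Inserting a burn-in of length $\Tmix^Q$ before each second-moment step restores stationarity but costs $\Tmix^Q$ per step, and with up to $|\bV|-1$ steps this accumulates $|\bV|\,\Tmix^Q$, which is not controlled by $\Thit^Q$. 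Even worse, after a coalescence the survivors' positions are conditioned on having met, so stationarity is lost and the argument does not iterate. Your effective-resistance alternative is too vague to rescue this: resistance governs commute times, not the short-time meeting probabilities that drive the $m^2$ rate. Note too that the paper's own \thmref{Cr} gives only $\Ex{C_k}\le K_1(\Thit^Q/k+\Tmix^Q)$, with an additive $\Tmix^Q$ that does not telescope into your per-step target $\Thit^Q/m^2$.

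The paper avoids the per-step bound entirely via a construction you have not anticipated. It introduces an auxiliary process with a time-dependent list of \emph{allowed killings} (\secref{allowed}) that stochastically dominates the true one: walkers are partitioned into dyadic blocks $A_0,\dots,A_m$ with $|A_j|=2^j$, and after a \emph{single} burn-in of length $2\Tmix^Q$, time is split into epochs of length $\Theta(2^{-j}\Thit^Q)$ during which only walkers in $A_{j-1}$ may kill those of higher index. The restriction guarantees that the hunters in $A_{j-1}$ are still alive when needed, and the one-time burn-in makes a constant fraction of them stationary once and for all (\propref{goodwalkers}, \propref{abundance}). Conditioning on the prey's trajectory, \lemref{meetingtime} then makes the avoidance events for distinct stationary hunters genuinely independent, giving $\Pr{a\in A_i\text{ survives to epoch }j\mid\sG}\le 5^{j-i}$; summing $\sum_i 2^i\cdot 5^{-i}<\infty$ yields a uniform-in-$n$ bound on $\Pr{|S_{t_0}|\ge2}$ at a fixed $t_0=\Theta(\Thit^Q)$, and the theorem follows by a geometric-trials iteration (\propref{simpler}). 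The decorrelation you hoped to extract from a second-moment argument is manufactured instead by the allowed-killings schedule, which is the paper's main technical idea.
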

\begin{remark}Here $x^{(n)}$ is an initial condition, with $n$ arbitrary. In particular, there may be more or less than one walker at each site $v\in\bV$ in the beginning of the process. Allowing for arbitrary initial conditions is convenient for our proofs, but does not really change the results.\end{remark}
We also prove a stronger result. Let $C_k$ denote the first time at
which there are at most $k$ clusters of coalesced walkers ($k\geq
1$). Notice that $C_1=C$ with this definition.
\begin{theorem}\label{thm:Cr}There exists a universal constant $K_1>0$ such that, in the same setting of \thmref{main}:
$$\forall k\in\N\backslash\{0\},\, \Exp{x^{(n)}}{C_k}\leq K_1\,\left(\frac{\Thit^Q}{k} +\Tmix^Q\right),$$
where $\Tmix^Q$ is the mixing time of $Q$ (see \secref{markovbasics} for a
definition).\end{theorem}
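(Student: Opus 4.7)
The plan is to prove Theorem~\ref{thm:Cr} by a dyadic phase argument, reducing it to a single-phase estimate which uses the exponential meeting-time inequality announced in the abstract. Let $N_t$ denote the number of clusters at time $t$, so that $C_k = \inf\{t \geq 0 : N_t \leq k\}$. The central claim we aim for is: starting from any configuration with $m \geq 2k$ clusters, and after the walkers have had time to equilibrate, the additional expected time until $N$ drops to at most $m/2$ is bounded by $K\,\Thit^Q/m$. Given this claim, Theorem~\ref{thm:Cr} follows by summing over dyadic levels $m \in \{2k, 4k, 8k, \dots\}$, producing the geometric series $K\,\Thit^Q \sum_{j \geq 1} 2^{-j}/k = O(\Thit^Q/k)$, plus a single additive $O(\Tmix^Q)$ for the initial equilibration; this is what makes $\Tmix^Q$ appear additively rather than multiplicatively in the final bound.

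For the single-phase estimate we would fix a configuration at time $s$ with $m$ clusters and condition on the trajectory $(\xi_t)_{t \geq s}$ of one distinguished cluster, chosen by some deterministic rule (say, the cluster with the smallest label among those whose most recent coalescence was earliest). Applied to each of the other $m-1$ walkers $X^{(i)}$ separately, the exponential meeting-time inequality should yield $\Pr{X^{(i)} \text{ does not meet } \xi \text{ on } [s,\,s+t] \mid \xi} \leq e^{-c\,t/\Thit^Q}$, uniformly in the (conditionally deterministic) target trajectory $\xi$ and in the initial location of $X^{(i)}$. Summing, the expected number of non-coalesced walkers at time $s+t$ is at most $(m-1)\,e^{-c\,t/\Thit^Q}$. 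Choosing $t = K\,\Thit^Q/m$ for a large enough constant $K$ makes this quantity a definite fraction of $m$, so by Markov's inequality at least $m/4$ coalescences with $\xi$ have occurred by that time with positive probability; iterating this and applying the strong Markov property converts the observation into the desired expected-time bound for a single phase.

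The main obstacle is that the walkers $X^{(i)}$ are not independent of each other, nor of $\xi$, because the coalescing dynamics entangles them. What rescues the argument is precisely that the exponential meeting-time inequality is stated for a \emph{deterministic} (equivalently, independent) target trajectory: once we condition on $\xi$ we may argue walker by walker, at the cost of some bookkeeping so as not to double-count coalescence events. A secondary, more technical issue is to show that the additive $O(\Tmix^Q)$ cost need only be paid once: once cluster marginals are close to stationarity at the end of the warm-up, they remain so throughout subsequent phases, since each surviving cluster still evolves marginally as a $Q$-walk until it coalesces. Making this stationarity preservation rigorous, and handling the coupling carefully across the dyadic phases, is where we expect the bulk of the technical work to lie.
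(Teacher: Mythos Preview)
Your single-phase estimate contains a quantitative error that breaks the argument. You fix one distinguished trajectory $\xi$ and bound, for each of the remaining $m-1$ walkers, the probability of not meeting $\xi$ by $e^{-ct/\Thit^Q}$; summing gives $\mathbb{E}[\#\text{ not meeting }\xi\text{ by }t]\leq (m-1)e^{-ct/\Thit^Q}$. But with $t = K\,\Thit^Q/m$ this is $(m-1)e^{-cK/m} = (m-1)(1-cK/m+O(m^{-2})) = m - O(1)$, not ``a definite fraction of $m$'' strictly below $m$. In other words, in time $\Thit^Q/m$ only $O(1)$ walkers meet a single target $\xi$, not $\Theta(m)$ of them; to get $m/4$ meetings with a single target you need $t=\Theta(\Thit^Q)$, independent of $m$, and then the dyadic sum over levels gives $\Thit^Q\log(n/k)$ rather than $\Thit^Q/k$. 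The exponential meeting-time inequality is not strong enough when used this way: summing over walkers yields an \emph{expected count} that scales linearly in $t/\Thit^Q$, which is the wrong scale.

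The paper inverts the roles. Instead of fixing the killer's trajectory and asking how many victims find it, one fixes the trajectory of the walker $a$ to be killed and lets many independent stationary ``killers'' $b$ hunt it. Conditioning on $(X_s(a))_s=h$, the Meeting Time Lemma gives $\Prp{\pi}{\forall s\leq t,\,X_s(b)\neq h_s}\leq e^{-ct/\Thit^Q}$ for each such $b$, and since these events are conditionally independent one gets a \emph{product}: the survival probability of $a$ is at most $e^{-Nct/\Thit^Q}$ where $N$ is the number of available killers. It is this product structure (exponent proportional to $N$) that produces the correct $\Thit^Q/m$ time scale. To make this work one must guarantee that the killers are themselves still alive and stationary when needed; the paper arranges this via a deterministic schedule $\sA_t$ of ``allowed killings'' together with dyadic blocks $A_j$ of walker labels and epochs of length $\sim 2^{-j}\Thit^Q$, so that during epoch $j$ the $\approx 2^{j}$ walkers in $A_{j-1}$ are certainly alive and act as independent stationary killers for everything with larger index. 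Your warm-up idea is correct and matches the paper's epoch $\#\infty$, but the core combinatorics has to be reorganized around many simultaneous killers rather than a single distinguished target.
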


The dependence on $k$ in this Theorem is essentially best possible, as $\Exp{x^{(n)}}{C_k}\sim \Thit^Q/k$
for large complete graphs. The case $k=1$ gives back \thmref{main}, as $\Tmix^Q\leq c\,\Thit^Q$ for some universal $c>0$ \cite[Chapter 4]{AldousFill_RWBook}. We will nevertheless prove \thmref{main} first and then show how its proof can be modified to obtain \thmref{Cr}.

One justification for proving this second result is that it is helpful in approximating the {\em distribution} of $C$. We are in the process of writing a paper where we show that, if $Q$ is transitive and
$\Tmix^Q\ll \Thit^Q$, then
$$\mbox{Law of }\frac{C}{\Thit^Q}\approx \sum_{i=1}^{+\infty}\frac{Z_i}{i(i+1)}\mbox{, as for the complete graph (cf. \eqnref{distcomplete})}.$$
In particular, $\Ex{C}\sim \Thit^Q$. This was previously known only for discrete tori $\Z_L^d$ with $L\gg 1$ in $d\geq 2$ dimensions, due to Cox's paper \cite{Cox_Coalescing}\footnote{Transitivity can be dropped at
the cost of making stronger assumptions on $Q$ and using a different normalization factor.}. An important step in both our proof and Cox's argument is that $\Ex{C_k}\ll \Thit^Q$ if $k\gg 1$. Cox
proves this in \cite[Section 4]{Cox_Coalescing} via a simple
renormalization argument which is very specific for discrete tori, whereas we use \thmref{Cr} for the same purpose.

\subsection{Application to the voter model}\label{sec:voter}
We now sketch the connection between our results and the voter model \cite{Liggett_IPSBook,AldousFill_RWBook}
on a graph $\bG$ (this could be generalized to an arbitrary
generator $Q$, but we will not do this here). The
state of the process at a given time $t$ is a function:
$$\eta_t:V(\bG)\to \sO$$
where $V(\bG)$ is the vertex set of $\bG$ and $\sO$ is a fixed set of
possible {\em opinions}. The evolution of the process is as follows.
Each vertex $v\in V(\bG)$ ``wakes up" at rate $1$; when that happens at a time $t>0$, $v$ chooses one of its neighbors $w$
uniformly at random and updates its value of
$\eta_t(v)$ to $w$'s opinion $\eta_{t_-}(w)$; all other opinions stay the same.

A classical duality result (see eg. \cite[Chapter
5]{Liggett_IPSBook} or \cite[Chapter 14]{AldousFill_RWBook}) relates
the state of the process at a given time to a system of coalescing
random walks on $\bG$ moving backwards in time. In particular, the
consensus time for the voter model -- ie. the least time at which
all vertices of $\bG$ have the same opinion -- is dominated by the
coalescence time $C$ from the initial state with all vertices
occupied. This implies the following Corollary of \thmref{main}.

\begin{corollary}\label{cor:main}There exists a universal constant $K>0$ such that, for any graph $\bG$ and any set $\sO$, the expected value of the consensus time
of the voter model defined in terms of $\bG$ and $\sO$, started from
an arbitrary initial state, is bounded by $K\,\Thit^{\bG}.$\end{corollary}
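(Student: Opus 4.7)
The plan is to exploit the classical duality between the voter model and a system of coalescing random walks, sketched in the paragraph preceding the corollary, and then invoke \thmref{main}.

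First, I would set up the dual CRW system via the standard graphical construction. At each vertex $v \in V(\bG)$ place an independent rate-$1$ Poisson process; at each arrival time, draw an arrow from $v$ to a uniformly random neighbor $w$, recording the update ``$v$ copies $w$'s opinion''. Fix a target time $t > 0$. Reading these arrows backward from time $t$ toward $0$, the path starting at each vertex $v$ identifies the unique ancestor whose time-$0$ opinion dictates $\eta_t(v)$. These backward paths evolve, independently until they meet, as continuous-time simple random walks on $\bG$ (each jumping to a uniformly chosen neighbor at rate $1$), and once any two meet they coincide thereafter. So the backward-in-time dynamics is precisely a CRW system whose generator $Q$ is the continuous-time simple random walk on $\bG$, started with one walker at every vertex.

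Next, I would record the key one-sided domination. If all backward walkers have coalesced into a single cluster by some time $s \leq t$, then every vertex of $\bG$ traces its time-$t$ opinion back to a common time-$0$ ancestor, so $\eta_t$ is constant on $V(\bG)$ and consensus holds at time $t$. Hence, writing $T_{\text{cons}}$ for the voter model consensus time and $C$ for the coalescence time of the dual CRW system constructed above,
\ben
T_{\text{cons}} \leq C \mbox{ almost surely in the coupling,}
\een
uniformly over the initial opinion configuration $\eta_0 \in \sO^{V(\bG)}$.

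Finally, I would apply \thmref{main} to the simple random walk generator $Q$ on $\bG$ (which is reversible and irreducible whenever $\bG$ is connected) with the initial condition placing one walker at each of the $n = |V(\bG)|$ vertices; this yields $\Ex{C} \leq K\,\Thit^Q = K\,\Thit^{\bG}$ for the universal constant $K$ of \thmref{main}. Taking expectations in the previous display gives the corollary; the disconnected case is vacuous, since then $\Thit^{\bG} = +\infty$. There is no serious obstacle: the substantive content is the voter/CRW duality, which is classical. The only subtlety worth flagging in the write-up is that $T_{\text{cons}} = C$ need not hold in general---distinct initial opinions can happen to agree and produce consensus before all dual walkers have coalesced---but the one-sided bound $T_{\text{cons}} \leq C$ is all that is needed in order to reduce Corollary~\ref{cor:main} to \thmref{main}.
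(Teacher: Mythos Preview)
Your proposal is correct and follows essentially the same route as the paper: invoke the classical voter/CRW duality to obtain the domination $T_{\text{cons}}\leq C$, then apply \thmref{main}. The paper states this argument in a single sentence before the corollary, whereas you spell out the graphical construction and the one-sided nature of the bound, but there is no substantive difference.
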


Proposition $5$ in \cite[Chapter 14]{AldousFill_RWBook} shows that
the Corollary is tight up to the value of $K$ for vertex-transitive
$\bG$, at least when the initial conditions are iid uniform over
$\{-1,+1\}$ (say); we omit the details.

\subsection{Main proof ideas}\label{sec:outline}

Let us give an outline of the (elementary) proof of \thmref{main};
the proof of \thmref{Cr} is quite similar. For clarity, we first present an oversimplified account, and then explain how one can avoid the oversimplifications.

We label the $n$ walkers $(X_t(a))_{t\geq 0}$ with numbers $a=1,\dots,n$. Instead of
having walkers coalesce, we will assume that a walker $\# b$ will
kill any walker $\# a$ with $a>b$ that happens to be in the same
state as itself (this is made precise in \secref{Yprocess}). The number of walkers that are alive at time $t$ in
this process is precisely the number of clusters in the coalescing random walks process, and $C$ is the first time at which
only walker $\# 1$ is still alive. This implies that:
$$\Pr{C>t}\leq \sum_{a=2}^n \Pr{\mbox{walker \# $a$ alive at time
$t$}}.$$
We now make the following oversimplification:
\begin{quotation}\noindent {\bf Oversimplification $\# 1$:} walker $\# a$ dies at the first time when $X_t(a)=X_t(b)$ for some $b<a$.\end{quotation}

The reason why this is an oversimplification is that a walker $\# b$
may have died before meeting walker $\# a$. For the moment, we ignore
this and write:
$$\Pr{\mbox{walker \# $a$ alive at time $t$}}\leq \Pr{\bigcap_{b=1}^{a-1}\{\forall 0\leq s\leq t,\,X_s(a)\neq
X_s(b)\}}.$$ In order to simplify the RHS, we notice that the
trajectories $(X_t(u))_{t\geq 0}$ of walkers $\# u$, $1\leq u\leq
a$, are independent realizations of $Q$. Conditioning on
$X_s(a)=h_s$, $s\geq 0$, makes the events in the RHS independent,
and we deduce:
$$\Pr{\mbox{walker \# $a$ alive at time $t$}\mid (X_s(a))_{s\geq 0}=(h_s)_{s\geq 0}}\leq \prod_{b=1}^{a-1}\Pr{\forall 0\leq s\leq t,\,X_s(b)\neq h_s}.$$

We now make another oversimplification.

\begin{quotation}\noindent {\bf Oversimplification $\# 2$:} $(X_t(b))_{t\geq 0}$ is started from the stationary distribution for all $b$.\end{quotation}

This allows us to use the following Lemma, which we believe to be
new (and of independent interest).

\begin{lemma}[Meeting time Lemma; proven in \secref{proof_meetingtime}]\label{lem:meetingtime}Let $(X_t)_{t\geq 0}$ be a realization of $Q$ starting from the stationary distribution $\pi$. Then there exist $v\in\bV$ and a
quasistationary distribution ${\qstat}_v$ for $\bV\backslash\{v\}$
such that for any deterministic path $h\in\bbD([0,+\infty),\bV)$, we
have:
$$\forall t\geq 0,\, \Prp{\pi}{\forall 0\leq s\leq t,\, X_t\neq h_t}\leq \Prp{{\qstat}_v}{H_v> t}=\exp\left(-\frac{t}{\Exp{{\qstat}_v}{H_v}}\right).$$\end{lemma}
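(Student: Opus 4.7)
\textbf{Proof plan for \lemref{meetingtime}.} The plan is to reduce the moving-target problem to a product of fixed-target killed semigroups, and then bound that product in the $L^2(\pi)$ operator norm. Since $\bV$ is finite and $h \in \bbD([0,\infty), \bV)$ is c\`adl\`ag, its restriction to $[0,t]$ is automatically piecewise constant with finitely many jumps. Write $0 = s_0 < s_1 < \dots < s_N = t$ for the jump times (together with $t$) and set $w_i := h_{s_i}$, $\Delta_i := s_{i+1}-s_i$. For each $w\in\bV$, let
\begin{equation*}
P^{w}_\Delta(x,y) := \Prp{x}{X_\Delta = y,\; H_w > \Delta}
\end{equation*}
denote the sub-Markov kernel of the chain killed upon hitting $w$. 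Iterating the strong Markov property at the times $s_i$ gives
\begin{equation*}
f_h(t) := \Prp{\pi}{\forall\, 0 \le s \le t:\, X_s \neq h_s} \;=\; \langle \mathbf{1},\, P^{w_0}_{\Delta_0} P^{w_1}_{\Delta_1} \cdots P^{w_{N-1}}_{\Delta_{N-1}} \mathbf{1}\rangle_{L^2(\pi)}.
\end{equation*}

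The core ingredient is the spectral description of $P^w_\Delta$. Because $Q$ is $\pi$-reversible, so is the killed chain; hence $P^w_\Delta$ is self-adjoint on $L^2(\pi)$, and its operator norm equals $e^{-\lambda_w \Delta}$, where $\lambda_w > 0$ is the smallest eigenvalue of $-Q$ restricted to functions vanishing at $w$. Standard quasistationary theory for irreducible reversible chains identifies $\lambda_w$ with the reciprocal of the Yaglom-limit mean hitting time: $\lambda_w = 1/\Exp{\qstat_w}{H_w}$, and $\Prp{\qstat_w}{H_w > t} = e^{-\lambda_w t}$. Now apply Cauchy--Schwarz in $L^2(\pi)$ together with submultiplicativity of the operator norm, noting that $\|\mathbf{1}\|_{L^2(\pi)} = 1$:
\begin{equation*}
f_h(t) \;\le\; \|\mathbf{1}\|^{2}_{L^2(\pi)} \cdot \prod_{i=0}^{N-1} \|P^{w_i}_{\Delta_i}\|_{L^2(\pi)} \;=\; \exp\!\left(-\sum_{i=0}^{N-1} \lambda_{w_i} \Delta_i\right).
\end{equation*}

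To close the argument, choose $v \in \bV$ once and for all to minimize $\lambda_w$ over $w$ (equivalently, to maximize $\Exp{\qstat_w}{H_w}$). Then $\lambda_{w_i} \ge \lambda_v$ for every $i$ and $\sum_i \Delta_i = t$, so the exponent above is at most $-\lambda_v t$, yielding $f_h(t) \le e^{-\lambda_v t} = \Prp{\qstat_v}{H_v > t}$. Crucially $v$ and $\qstat_v$ depend only on $Q$, not on $h$ or $t$, which is exactly the form asked for.

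The main obstacle I expect is the spectral identification $\|P^w_\Delta\|_{L^2(\pi)} = e^{-\lambda_w \Delta}$ together with the identity $\lambda_w = 1/\Exp{\qstat_w}{H_w}$. This requires (i) inheriting detailed balance for the killed kernel, (ii) Perron--Frobenius to pick out a positive top eigenfunction $\phi_w$ vanishing at $w$, and (iii) the identification $\qstat_w \propto \pi\phi_w$ of the Yaglom limit; these are classical but must be invoked cleanly. It is also worth noting why starting from $\pi$ is essential: for a generic initial law $\mu$, the Cauchy--Schwarz step produces an additional factor $\|\mu/\pi\|_{L^2(\pi)}$ that need not be $\le 1$, which is precisely why the clean, prefactor-free exponential bound requires stationarity.
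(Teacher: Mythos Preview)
Your argument is correct and shares the paper's core strategy---bound the avoidance probability by a product of $L^2(\pi)$ operator norms via submultiplicativity, then identify each norm spectrally with the principal eigenvalue of the killed generator---but your execution is considerably more direct. The paper discretizes $[0,t]$ uniformly into $n$ pieces, replaces hard killing at $h(it/n)$ by a soft penalty $(I-\tfrac{\Delta}{n}D_{h(it/n)})$, bounds by $\bigl(\max_v\|e^{-t\Pi^{1/2}Q\Pi^{-1/2}/n}(I-\tfrac{\Delta}{n}D_v)\|_{\rm op}\bigr)^n$, and then takes a Trotter-type limit $n\to\infty$ followed by a separate perturbation argument for $\Delta\to\infty$ to recover $\lambda_{\min}(Q_{-v})$. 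You instead exploit directly that $h$ is piecewise constant on $[0,t]$, write $f_h(t)$ exactly as $\langle\mathbf{1},\,P^{w_0}_{\Delta_0}\cdots P^{w_{N-1}}_{\Delta_{N-1}}\mathbf{1}\rangle_{L^2(\pi)}$ with genuine killed semigroups, and read off the bound in one line---no limits, no penalty parameter, no perturbation claim. Your route is shorter and more transparent; the paper's detour through soft killing buys nothing extra for this lemma. One cosmetic slip: with the paper's sign convention ($Q\succeq 0$, transitions $e^{-tQ}$), $\lambda_w$ is the smallest eigenvalue of $Q$ (not $-Q$) restricted to functions vanishing at $w$.
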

\begin{remark}The proof of \lemref{meetingtime} shows that we may take $v\in h([0,+\infty))$. This is a well-known result if $h\equiv v$ \cite[Chapter 3, Section 6.5]{AldousFill_RWBook}. An application of this Lemma to so-called cat-and-mouse games is sketched in the final section.\end{remark}

Notice that $\Exp{{\qstat}_v}{H_v}\leq \Thit^Q$, so:
$$\Pr{\mbox{walker \# $a$ alive at time $t$}\mid (X_s(i))_{s\geq 0}=(h_s)_{s\geq 0}}\leq
e^{-\frac{(a-1)t}{\Thit^Q}}.$$ This shows that:
$$\Pr{C>t}\leq \sum_{a=2}^n e^{-\frac{(a-1)t}{\Thit^Q}}.$$
If one takes $t = (\ln 2 + c)\Thit^Q$, the RHS becomes:
$$\Pr{C>(\ln 2 + c)\Thit^Q}\leq \sum_{a=2}^n 2^{-a+1}e^{-(a-1)c}\leq e^{-c},$$
and this gives $\Ex{C}\leq (\ln 2 + 2)\Thit^Q$.

Of course, this is {\em not} a proof of \thmref{main} because of the
oversimplifications. Our way out of this is to introduce a process
where at any given time there is a list of {\em allowed killings}.
At any time $t$ there will be a set $\sA_t$, so that walker $\# b$
may kill walker $\# a$ at time $t$ only if $b<a$ and $(b,a)\in \sA_t$ (cf. \secref{allowed}). The salient characteristics of this
process are:
\begin{enumerate}
\item For any choice of $\sA=(\sA_t)_{t\geq 0}$, the set of alive
walkers in the process defined via $\sA$ dominates the corresponding
set in the process without $\sA$ (see \propref{domination}).
\item A judicious choice of $\sA$ will ensure that for each $a$, there will
be a large enough time interval where a large number of walkers will
be available to kill walker $\# a$. Moreover, many of these will be
stationary.\end{enumerate}

Item $1$ allows us to consider the process with a list of allowed killings instead of the original process in order to obtain upper bounds. Item $2$ will mean that we may apply the Meeting Time Lemma to at least some of the walkers with indices $b<a$, in some time intervals. These two ingredients will allow us to ``fix" the oversimplified proof just presented.

\subsection{Organization}

The remainder of the paper is organized as follows. \secref{prelim}
introduces our notation and recalls some basic concepts. \secref{mainprocesses} defines the main processes we consider in the paper. \secref{mainproofs}
presents the proofs of the two Theorems, and
\secref{meetingtime} presents the proof of
\lemref{meetingtime}. Some final comments are presented in the last Section.

\section{Preliminaries}\label{sec:prelim}

In what follows we recall some basic material while also fixing
notation.

\subsection{Basic notation}
$\N$ is the set of non-negative integers. Given
$n\in\N\backslash\{0\}$, we set $[n]\equiv\{1,2,\dots,n\}$.

We will often speak of {\em universal constants}. These are numbers
that are independent of any other object or parameter under
consideration, be it a Markov chain, the initial state of a process
under consideration or anything else.

The
cardinality of a finite set $S$ is denoted by $|S|$, and $2^S$ represents the power set of $S$ (ie. the set whose elements are the subsets of $S$). The set of all probability
measures over $S$ will be denoted by $M_1(S)$. $\R^S$ denotes the space of all functions $f:S\to\R$, or equivalently of all (column) vectors with entries indexed by $S$. Linear operators acting on $\R^{S}$ correspond to matrices with rows and columns indexed by the elements of $S$. If $A$ is some matrix of this sort, $\|A\|_{\rm op}$ is the operator norm of $A$. If $A$ is symmetric, we let $\lambda_{\min}(A)$, $\lambda_{\max}(A)$ denote the minimum and maximum eigenvalues of $A$ (respectively).

Given a finite set $F\neq \emptyset$, a function $\omega:[0,+\infty)\to F$ is said to be c\`{a}dl\`{a}g if there exist
$t_0=0<t_1<t_2<\dots<t_n<\dots\nearrow +\infty$ with $\omega$ constant
over each interval $[t_{i-1},t_{i})$. $\bbD([0,+\infty),F)$ is the set of all such c\`{a}dl\`{a}g functions, with the $\sigma$-field generated by the projections ``$\omega\mapsto \omega(t)$" ($t\geq 0$).

\subsection{Markov chain basics}\label{sec:markovbasics}

Let $\bV$ be a finite, non-empty set. A matrix $Q$ (with rows and columns labelled by
$\bV$) which acts on $\R^{\bV}$ in the following way:
$$Q:f(\cdot)\in\R^{\bV}\mapsto
\sum_{x\in\bV, x\neq \cdot}q(\cdot,x)(f(\cdot)-f(x)),\mbox{ with
}q(\cdot,\cdot\cdot)\geq 0$$ defines a unique {\em continuous-time
Markov chain} on $\bV$. More precisely, there exists a unique family of measures $\{\Prpwo{x}\}_{x\in\bV}$
over $\bbD([0,+\infty),\bV)$ (with the $\sigma$-field generated by finite-dimensional
projections) such that, letting
$$X_t:\omega\in \bbD([0,+\infty),\bV)\mapsto X_t(\omega)\equiv \omega(t)\in\bV$$
and $\sF_t\equiv \sigma(X_s\,:\,s\leq t)$, we have
$\Prp{x}{X_0=x}=1$ and
\begin{equation}\label{eq:transitionprobs}\Prp{x}{X_{t+s}=y\mid\sF_s} = \mbox{ the entry of $e^{-tQ}$ labelled by $(X_t,y)$} \,(t,s\geq 0,y\in\bV).\end{equation}
$Q$ is said to be the generator of the Markov chain and
the numbers $q(x,y)$ ($x,y\in\bV$, $x\neq y$) are the transition rates. We let
$\Exp{x}{\cdot}$ denote expectation with respect to $\Prpwo{x}$.

We also define $$\Prpwo{\mu} =
\sum_{x\in\bV}\mu(x)\Prpwo{x},\;\;(\mu\in M_1(\bV))$$ which
we interpret in the customary way, as describing the law of the chain given by $Q$ from a random
initial state with law $\mu$. $\Exp{\mu}{\cdot}$ is the corresponding expectation symbol.

We will always assume that $Q$ is {\em irreducible}, meaning that for all $A\subset
\bV$ with $A,\bV\backslash A\neq\emptyset$ there exist $a\in A$ and
$b\in\bV\backslash A$ with $q(a,b)\neq 0$. In this case there exists
a unique probability measure $\pi\in M_1(\bV)$ which is {\em
stationary} in the sense that $\Prp{\pi}{X_t=\cdot}=\pi(\cdot)$ for
all $t\geq 0$. Moreover, we have that:
$$\forall x,y\in\bV,\, \lim_{t\to +\infty}\Prp{x}{X_t=y} = \pi(y).$$
The {\em mixing time} of $Q$ measures the speed of this convergence:
$$\Tmix^Q\equiv \inf\left\{t\geq 0\,:\, \forall x\in\bV,\, \max_{S\subset \bV}|\Prp{x}{X_t\in S}-\pi(S)|\leq 1/4\right\}.$$

Finally, we will also assume that $Q$ is {\em reversible} with
respect to $\pi$, which means that $\pi(x)q(x,y)=\pi(y)q(y,x)$ for
all distinct $x,y\in\bV$. This is the same as requiring that the
matrix $\Pi^{1/2}\,Q\,\Pi^{-1/2}$ is symmetric, where $\Pi$ is
diagonal and has the values $\pi(v)$, $v\in\bV$ on the diagonal.

\section{Processes with multiple random walks}\label{sec:mainprocesses}

We define here the main processes that we will be concerned with, all of which involve $n$ random walkers for some integer $n\in\N\backslash\{0,1\}$. We will assume that $Q$ and $\{\Prpwo{x}\}_{x\in \bV}$ are as defined in \secref{markovbasics}.

\subsection{Independent random walks}
We first define a processes made out of $n$ independent realizations of
the Markov chain with generator $Q$. More specifically, given
$$x^{(n)}=(x(1),x(2),\dots,x(n))\in\bV^n,$$
we let $\Prpwo{x^{(n)}}$ denote the distribution on
$\bbD([0,+\infty),\bV^n)$ corresponding to $n$ independent
trajectories of $Q$, \begin{equation}\label{eq:introduceindQ}(X_t^{(n)})_{t\geq 0}\equiv (X_t(a)
\,: \,a\in[n])_{t\geq 0},\end{equation} with each $(X_t(a))_{t\geq 0}$
started from $x(a)$. That is, the joint law of $\{(X_t(a))_{t\geq 0}\}_{a\in [n]}$ is the product measure:
$$\Prpwo{x(1)}\times \Prpwo{x(2)}\times \dots \times \Prpwo{x(n)}.$$
Notice that our notation $\Prpwo{x^{(n)}}$ does not refer explicitly
to the fact that this is a process on $\bV^n$, as opposed to the
process over $\bV$ defined in the previous subsection. This
distinction should be clear from context and from the fact that we
write all $x^{(n)}\in\bV^n$ with a ``$(n)$" superscript. The independent random walks process is also a Markov chain: for $x^{(n)}=(x(1),\dots,x(n))$ and $y^{(n)}=(y(1),\dots,y(n))$ distinct, the transition rate from $x^{(n)}$ to $y^{(n)}$ is:
\begin{equation}\label{eq:transitionsindep}q^{(n)}(x^{(n)},y^{(n)})\equiv \left\{\begin{array}{ll}q(x(i),y(i))&\mbox{if }x(i)\neq y(i)\mbox{ and }x(j)=y(j)\mbox{ for all }j\in[n]\backslash\{i\};\\ 0& \mbox{otherwise}.\end{array} \right.\end{equation}

\subsection{Coalescing random walks}\label{sec:CRW} For our purposes, it is convenient to define this process, denoted by
$$(\Co_t^{(n)})_{t\geq 0}\equiv (\Co_t(a) \,:\, a\in[n])_{t\geq 0}$$
as a deterministic function of the independent random walks process. The idea is that, once a walker meets
another walker with smaller index, it starts following the
trajectory of the latter. That is, consider a realization of
$\Prpwo{x^{(n)}}$ as in \eqnref{introduceindQ}.  First define:
$$\Co_t(1)\equiv X_t(1)\;\;(t\geq 0).$$Given
$a\in[n]\backslash\{1\}$, assume inductively that $(\Co_t(b))_{t\geq
0}$ has been defined for $1\leq b<a$. Since $Q$ is irreducible,
there a.s. is a {\em first} time $\tau_a$ at which $X_t(a)=\Co_t(b)$
for some $b\in [a-1]$. More precisely, define:
$$\tau_a\equiv \inf\{t\geq 0\,:\, \exists 1\leq
b<a,\,X_t(a)=\Co_t(b)\}$$ and
$$B_a\equiv \min\{b\in[a-1]\,:\, X_{\tau_a}(a)=X_{\tau_a}(b)\}$$ and then set:
$$\Co_t(a)\equiv \left\{\begin{array}{ll}X_t(a), & 0\leq t<\tau_a;\\ \Co_t(B_a), & t\geq \tau_a;  \end{array}\right.\;\mbox{ for each }t\geq 0.$$
One can show that the law of $(\Co_t^{(n)})_{t\geq 0}$ is invariant under permutations of the $x(i)$. We also {\em define} the set:
\begin{equation}\label{eq:defSt}S_t\equiv \{v\in\bV\,:\, \exists a\in[n],\, \Co_t(a)=v\}\end{equation}
as the set of occupied sites in this process. Our definition of
$(S_t)_{t\geq 0}$ coincides with the more traditional coalescing
random walks process defined in eg. \cite{Cox_Coalescing}. We also
set:
$$C_k\equiv \inf\{t\geq 0\,: \,|S_t|\leq k\}\;\;(k\in\N\backslash\{0\})$$
and $C\equiv C_k$.
\begin{remark}\label{rem:repeats}We note that this process makes sense even if $x^{(n)}$ contains repeats, ie. if there exist $i\neq j$ with $x(i)=x(j)$.\end{remark}

\subsection{Random walks with killings}\label{sec:Yprocess} Let $\partial\not\in V$ be a
``coffin state". We define a new process
$$(Y^{(n)}_t)_{t\geq 0}\equiv(Y_t(a) \,:\, a\in[n])_{t\geq 0}.$$The new idea is that a walker with index $a$ will be
killed by a walker of index $b<a$ occupying the same site. More precisely, we first define:
$$Y_t(1)\equiv X_t(1)\;\;(t\geq 0).$$Given
$a\in[n]\backslash\{1\}$, assume inductively that $(Y_t(b))_{t\geq
0}$ has been defined for $1\leq b<a$. Define:
$$\tau_a\equiv \inf\{t\geq 0\,:\, \exists 1\leq
b<a,\,X_t(a)=Y_t(b)\}$$ and set:
$$Y_t(a)\equiv \left\{\begin{array}{ll}X_t(a), & 0\leq t<\tau_a;\\ \partial, & t\geq \tau_a;  \end{array}\right.\;\mbox{ for each }t\geq 0.$$
Although our new definition of $\tau_a$  different from the previous
one, it is easy to show that the two definitions coincide, and that
in fact:
\begin{proposition}[Proof omitted] Let $S_t$ be as \eqnref{defSt}. Then for all $t\geq 0$,
$$S_t=\{v\in \bV\,:\, \exists a\in [n],\, Y_t(v)=a\}$$
and
$$|S_t| = |\{a\in [n]\,:\,Y_t(a)\neq \partial\}|.$$
Therefore, for all $k\in\N\backslash\{0\}$,
$$\Prp{x^{(n)}}{C_k>t} = \Prp{x^{(n)}}{|S_t|\geq k+1} = \Prp{x^{(n)}}{|\{a\in[n]\,:Y_t(a)\neq \partial\}|\geq k+1}.$$\end{proposition}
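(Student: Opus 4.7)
The plan is to establish both set identities by first proving, by induction on the walker index $a\in[n]$, that the times $\tau_a$ defined inside the coalescing construction and inside the killing construction actually coincide, and then reading off both displayed equalities as immediate consequences. The third assertion, concerning $C_k$, will follow from monotonicity of $t\mapsto |S_t|$.

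Step one: write $\tau_a^{\Co}$ for the hitting time appearing in the definition of $\Co_t(a)$ and $\tau_a^Y$ for the one appearing in the definition of $Y_t(a)$. I would prove, by simultaneous induction on $a$, the statement
$$\tau_a^{\Co}=\tau_a^Y=:\tau_a,\qquad Y_t(a)=\begin{cases} X_t(a) & t<\tau_a \\ \partial & t\geq \tau_a\end{cases},\qquad \Co_t(a)\in\{X_t(c):c\leq a,\ t<\tau_c\}\text{ for all }t\geq 0.$$
The case $a=1$ is immediate. For the induction step, use the fact that $Y_t(b)\in\{X_t(b),\partial\}$ while $X_t(a)\neq\partial$, so the event $X_t(a)=Y_t(b)$ for some $b<a$ is the same as the event $X_t(a)=X_t(b)$ for some $b<a$ with $t<\tau_b$. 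For $\tau_a^{\Co}$, use the inductive description of $\Co_t(b)$: it always equals $X_t(c)$ for some $c\leq b$ with $t<\tau_c$, and conversely whenever $X_t(a)=X_t(c)$ with $t<\tau_c\leq \tau_b$ one has $X_t(a)=\Co_t(c)$. Hence the two infima range over the same set of collision events and are equal. The updated inductive description of $Y_t(a)$ and $\Co_t(a)$ then follows from their respective definitions at the common time $\tau_a$.

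Step two: the first displayed identity. By definition $S_t=\{\Co_t(a):a\in[n]\}$, and by step one each $\Co_t(a)$ equals $X_t(c)$ for some $c\leq a$ with $t<\tau_c$, while conversely every such $c$ is its own representative. Since $t<\tau_c$ is equivalent to $Y_t(c)\neq\partial$, and on that event $Y_t(c)=X_t(c)$, we get
$$S_t=\{X_t(c):c\in[n],\ t<\tau_c\}=\{Y_t(c):c\in[n],\ Y_t(c)\neq\partial\},$$
which is the claimed set equality.

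Step three: the cardinality identity reduces to showing that distinct living walkers occupy distinct sites. Suppose $b<b'$ are both alive at $t$, i.e.\ $t<\tau_{b'}$ (which forces $t<\tau_b$ as well, since $\tau_b\leq \tau_{b'}$ by a standard nesting argument, or more directly because $b'$ alive means no smaller-indexed walker has been caught, in particular at time $t$). If $X_t(b)=X_t(b')$, the pair $(b,b')$ would already witness a collision at time $t$ in the definition of $\tau_{b'}^Y$, contradicting $t<\tau_{b'}$. Hence the map $a\mapsto Y_t(a)$ from $\{a:Y_t(a)\neq\partial\}$ into $S_t$ is a bijection, and $|S_t|=|\{a:Y_t(a)\neq\partial\}|$. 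Finally, because coalescences can only decrease the number of occupied sites, $t\mapsto |S_t|$ is non-increasing, so $\{C_k>t\}=\{|S_t|>k\}=\{|S_t|\geq k+1\}$, yielding the displayed identity for $\Prp{x^{(n)}}{C_k>t}$.

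The only mildly delicate point is the bookkeeping for the ``chain of absorptions" in the coalescing process during the induction; everything else is a routine verification of definitions, which is presumably why the authors felt comfortable omitting the proof.
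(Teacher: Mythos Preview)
The paper explicitly omits this proof, so there is no argument to compare against; the only hint is the sentence immediately preceding the Proposition, which asserts that the two definitions of $\tau_a$ coincide. Your inductive proof that $\tau_a^{\Co}=\tau_a^Y$ and that $\Co_t(a)$ always equals $X_t(c)$ for some $c\leq a$ with $t<\tau_c$ is exactly what is needed, and Steps one and two are correct.

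There is, however, a slip in Step three. You write that ``$b<b'$ are both alive at $t$, i.e.\ $t<\tau_{b'}$ (which forces $t<\tau_b$ as well, since $\tau_b\leq\tau_{b'}$ by a standard nesting argument, or more directly because $b'$ alive means no smaller-indexed walker has been caught)''. This parenthetical is wrong on both counts: there is no monotonicity $\tau_b\leq\tau_{b'}$ in general (walker $3$ may be killed by walker $1$ long before walker $2$ is), and ``$b'$ alive'' means that no smaller-indexed walker has caught $b'$, not that no smaller-indexed walker has itself been killed. Fortunately the parenthetical is also unnecessary: you are considering two distinct elements $b,b'$ of the set $\{a:Y_t(a)\neq\partial\}$, so $t<\tau_b$ and $t<\tau_{b'}$ are both part of the hypothesis by definition. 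With that correction your injectivity argument (if $X_t(b)=X_t(b')$ with $b<b'$ and $t<\tau_b$, then $Y_t(b)=X_t(b')$ witnesses $\tau_{b'}^Y\leq t$, a contradiction) goes through and the cardinality identity follows.
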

\begin{remark}\label{rem:repeats2}As in \remref{repeats}, we may allow $x^{(n)}$ where $x(i)=x(j)$ for some pair $i\neq j$. Notice, however, that $Y_0^{(n)}\neq x^{(n)}$ in this case.\end{remark}

\subsection{Random walks with a list of allowed killings}\label{sec:allowed} Now assume that we have
a deterministic c\`{a}dl\`{a}g trajectory:
$$\sA:t\geq 0\mapsto 2^{[n]^2}.$$
We define yet another process:$$((Y^{\sA}_t)^{(n)})_{t\geq 0}\equiv (Y^\sA_t(a) \,:\, a\in[n])_{t\geq 0}$$
where a walker with index $a$ may be killed by a walker with index
$b$ only if they occupy the same site at some time $t$ {\em and}
$(b,a)\in\sA_t$. Intuitively, this means that $b$ is allowed to kill $a$ only at times $t$ with $(b,a)\in \sA_t$.

For a formal definition, we first set:
$$Y^\sA_t(1)\equiv X_t(1)\;\;(t\geq 0).$$Given
$a\in[n]\backslash\{1\}$, assume inductively that
$(Y^\sA_t(b))_{t\geq 0}$ has been defined for $1\leq b<a$. Define:
$$\tau^\sA_a\equiv \inf\{t\geq 0\,:\, \exists 1\leq
b<a,\,(b,a)\in\sA_t\mbox{ and }X_t(a)=Y^{\sA}_t(b)\}$$ and set:
$$Y^{\sA}_t(a)\equiv \left\{\begin{array}{ll}X_t(a), & 0\leq t<\tau^{\sA}_a;\\ \partial, & t\geq \tau^{\sA}_a;  \end{array}\right.\;\mbox{ for each }t\geq 0.$$
The following Proposition shows that the process with a list of allowed killings can be used to upper bound $\Exp{x^{(n)}}{C_k}$.
\begin{proposition}\label{prop:domination}Define:
$$S^{\sA}_t\equiv \{Y_t^\sA(a)\,:\, a\in[n]\}.$$
For any choice of $\sA$ as above and of initial state $x^{(n)}$, one can couple $(S_t)_{t\geq 0}$ and $(S^{\sA}_t)_{t\geq 0}$ such that (almost surely) $S_t\subset S^\sA_t$ for all $t\geq 0$. In particular, for all $k\in\N\backslash\{0\}$,
$$\Prp{x^{(n)}}{C_k>t}=\Prp{x^{(n)}}{|S_t|\geq k+1}\leq \Prp{x^{(n)}}{|S^{\sA}_t|\geq k+1}.$$\end{proposition}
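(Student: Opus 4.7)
I would attempt a natural coupling argument: build both $(S_t)_{t\ge 0}$ and $(S^{\sA}_t)_{t\ge 0}$ as deterministic functionals of the same realization of the independent Markov chains $(X^{(n)}_t)$ introduced in \eqnref{introduceindQ}, using the constructions of \subref{CRW} and \subref{allowed}. The proposition stated immediately before \subref{allowed} identifies $S_t$ with $\{X_t(a)\,:\,Y_t(a)\neq\partial\}$, the set of $X$-positions of walkers still alive in the unrestricted killing process, and analogously $S^{\sA}_t\cap\bV=\{X_t(a)\,:\,Y^{\sA}_t(a)\neq\partial\}$. Hence the claimed inclusion $S_t\subseteq S^{\sA}_t$ reduces to the following statement: \emph{for every walker $a$ alive in $Y$ at time $t$ there is some walker $a'$ alive in $Y^{\sA}$ at time $t$ with $X_t(a')=X_t(a)$.}

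I would try to prove this by induction on the label $a$. The base case $a=1$ is immediate since walker $1$ is never killed in either process. For the inductive step: if $a$ is itself alive in $Y^{\sA}$ take $a'=a$; otherwise some $b<a$ killed $a$ in $Y^{\sA}$ at a time $\tau^{\sA}_a\le t$, and by the killing rule $X_{\tau^{\sA}_a}(b)=X_{\tau^{\sA}_a}(a)$ with $(b,a)\in\sA_{\tau^{\sA}_a}$. Were walker $b$ simultaneously alive in the unrestricted $Y$-process at $\tau^{\sA}_a$, we would have $Y_{\tau^{\sA}_a}(b)=X_{\tau^{\sA}_a}(b)=X_{\tau^{\sA}_a}(a)$, forcing $\tau_a\le\tau^{\sA}_a\le t$ and contradicting the hypothesis $Y_t(a)\neq\partial$. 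So $b$ must already be $Y$-dead at $\tau^{\sA}_a$, and one would like to iterate backwards along the $\sA$-killing chain originating at $b$ to obtain the desired $a'$, eventually reaching walker $1$ which is alive in both processes.

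The main obstacle I foresee is that this backward chain does not preserve $X$-positions: after the killing time $\tau^{\sA}_a$ the (independent) trajectories of $a$ and of its $\sA$-killing ancestors generically diverge, so the walker $a'$ produced at the end of the induction typically sits at $X_t(a')\neq X_t(a)$. If this obstacle cannot be closed by a smarter joint induction on indices and times---which simple examples suggest is actually the case---then the proof must replace the natural coupling by a tailored one. A plausible route is to retain the original $(Y^{\sA}_t)$ but redefine $(S_t)$ on the same probability space via an $\sA$-adapted coalescence rule engineered to keep $S_t\subseteq S^{\sA}_t$ as a pathwise invariant, then verify that the resulting set-valued process has the same generator on $2^{\bV}$ as the standard coalescing walks from $x^{(n)}$, so that its marginal law is unchanged. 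Once the almost sure inclusion is in hand, $|S_t|\le|S^{\sA}_t|$ and hence the ``in particular'' probability inequality follow immediately.
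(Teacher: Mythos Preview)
Your diagnosis is accurate: under the natural coupling where both processes are read off the same realization of $(X^{(n)}_t)$, the pathwise inclusion $S_t\subseteq S^{\sA}_t$ can genuinely fail. A three-walker example suffices: forbid $(1,2)$ in $\sA$ throughout but allow $(2,3)$; arrange that $X(2)$ meets $X(1)$ at some time $t_1$ and then $X(2)$ meets $X(3)$ at $t_2>t_1$. In $Y$ walker $2$ dies at $t_1$ and walker $3$ survives; in $Y^{\sA}$ walker $2$ survives $t_1$ and then kills $3$ at $t_2$. For $t>t_2$ one has $S_t=\{X_t(1),X_t(3)\}$ while $S^{\sA}_t\cap\bV=\{X_t(1),X_t(2)\}$, and once $X(2)$ and $X(3)$ separate the inclusion breaks. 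So you are right that the first inductive attempt cannot be repaired without changing the coupling.

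The paper itself omits the proof and offers only a one-sentence key idea, and that idea is exactly the re-coupling move you arrive at in your final paragraph. The paper's phrasing is operational rather than generator-based: whenever a walker $a$ that $\sA$ has spared later meets some $c>a$, invoke the strong Markov property at that meeting time (where $X_t(a)=X_t(c)$) to let $X(a)$ follow the future trajectory of $X(c)$; this leaves the law of $(X^{(n)}_t)$---and hence of $(S_t)$---unchanged while ensuring the $\sA$-surviving walker continues to track the relevant coalescing cluster. Your alternative of building an $\sA$-adapted coalescence and verifying that the resulting set-valued process has the standard coalescing-walk generator on $2^{\bV}$ is the same idea in more abstract form; either route works, and neither is written out in full in the paper.
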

We omit the proof of this rather intuitive Proposition. The key idea here is this: suppose we do {\em not} kill a walker $a$ at a given time $t_0$. The only way this could make $S_t$ ``smaller" is if $X_t(a)$ were to meet a walker $X_t(c)$ with $c>a$ at some later time $t\geq t_0$. But if this happens, we may pretend that $X_s(a)$ follows the trajectory of $X_s(c)$ for $s\geq t$; this follows from the Markov property coupled with the fact that $X_t(a)=X_t(c)$. This shows that in fact $S_t$ does {\em not} become smaller.
\begin{remark}\label{rem:repeats3}Similarly to \remref{repeats2}, we note that we may allow $x^{(n)}$ with $x(i)=x(j)$ for some pair $i\neq j$, but then $(Y_0^\sA)^{(n)}\neq x^{(n)}$.\end{remark}
\section{Proofs of the main Theorems}\label{sec:mainproofs}

We prove Theorems \ref{thm:main} and \ref{thm:Cr} in this Section. For simplicity, we first focus on the proof of \thmref{main}, and then show how it can be modified to prove the second Theorem. We will take the notation and definitions in Sections \ref{sec:markovbasics} and \ref{sec:mainprocesses} for granted.

\subsection{Preliminaries for the proof of \thmref{main}}

We first note that \thmref{main} follows from a seemingly different statement.

\begin{proposition}\label{prop:simpler}Let $c,\gamma>0$ be given universal constants. Suppose we can show that there exists some choice of $\sA=(\sA_t)_{t\geq 0}$ as in \secref{allowed} and of $0\leq t_0\leq c\,(\Tmix^Q + \Thit^Q)$ with
$$\forall n\in\N\backslash\{0\},\, \forall x^{(n)}\in \bV^n,\, \Prp{x^{(n)}}{|S^\sA_{t_0}|\geq 2}\leq 1-\gamma.$$
Then
$$\forall n\in\N\backslash\{0\},\, \forall x^{(n)}\in \bV^n,\, \Exp{x^{(n)}}{C}\leq K\,\Thit^Q$$
where $K>0$ is universal.\end{proposition}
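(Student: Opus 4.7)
The plan is to convert the hypothesis into a uniform single-step tail bound on $C$, and then iterate via the Markov property to obtain geometric decay. Setting $T \equiv c\,(\Tmix^Q + \Thit^Q)$, combine the hypothesis with \propref{domination} (applied with $k=1$) to get, for every $n \in \N\setminus\{0\}$ and every $x^{(n)} \in \bV^n$,
\[
\Prp{x^{(n)}}{C > T} \leq \Prp{x^{(n)}}{C > t_0} \leq \Prp{x^{(n)}}{|S^{\sA}_{t_0}| \geq 2} \leq 1 - \gamma.
\]
The crucial point is that the threshold $T$ and the slack $1-\gamma$ do not depend on $n$ or on the initial condition $x^{(n)}$.

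Next I would iterate using the Markov property of the independent walks $(X^{(n)}_t)_{t \geq 0}$. Conditional on the history up to time $T$, the occupied-sites set $S_T$ is determined; and by the strong Markov property, $(S_{T+s})_{s \geq 0}$ is distributed as a coalescing random walks process started with one walker at each site of $S_T$. Intuitively, once coalescence has happened each cluster is driven by its minimum-index ``head", which performs an independent walk until two heads meet and merge, so the future of $S_\cdot$ depends on the past only through $S_T$. Applying the uniform tail bound to this new starting configuration --- in which repeats are permitted by \remref{repeats} --- gives $\Prp{x^{(n)}}{C > 2T} \leq (1-\gamma)^2$, and an easy induction yields $\Prp{x^{(n)}}{C > mT} \leq (1-\gamma)^m$ for every $m \in \N$.

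Summing the tail,
\[
\Exp{x^{(n)}}{C} \leq T \sum_{m \geq 0} \Prp{x^{(n)}}{C > mT} \leq \frac{T}{\gamma} = \frac{c\,(\Tmix^Q + \Thit^Q)}{\gamma}.
\]
To remove $\Tmix^Q$ from the right-hand side I would invoke the universal inequality $\Tmix^Q \leq c'\,\Thit^Q$ (cited in the paper from \cite[Chapter 4]{AldousFill_RWBook}), which produces the required estimate with $K \equiv c(1 + c')/\gamma$. At this level the argument is purely a mechanical reduction: the real content of \thmref{main} lies in \emph{verifying} the hypothesis by constructing the list $\sA$ of allowed killings and the time $t_0$. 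The only care needed in the reduction itself is to check that restart is valid, which is automatic from the construction in \secref{CRW} together with the cluster-head observation above; so I do not expect a genuine obstacle in this step.
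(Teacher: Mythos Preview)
Your proposal is correct and follows essentially the same route as the paper's proof: combine the hypothesis with \propref{domination} to get a uniform one-step tail bound, iterate via the Markov property to obtain $\Prp{x^{(n)}}{C>\ell t_0}\leq (1-\gamma)^\ell$, sum the tail, and finish with $\Tmix^Q\leq c_0\Thit^Q$. The only cosmetic difference is that the paper phrases the iteration using the shift operators $\Theta_s$ on the independent-walks process rather than the set-valued Markov property of $(S_t)_{t\geq 0}$, but the two formulations are equivalent.
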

\begin{proof}Given $s\geq 0$, denote:
\begin{equation}\label{eq:defEs}E(s)\equiv \{C>s\} = \{|S_s|\geq 2\} = \bigcup_{a\in [n]\backslash\{1\}}\{\tau_a>s\}.\end{equation}
Combining the assumption of the Proposition with \propref{domination} gives:
\begin{equation}\label{eq:oneeventhere}\forall n\in\N\backslash\{1\},\, \forall x^{(n)}\in \bV^n,\, \Prp{x^{(n)}}{E(t_0)} \leq  \Prp{x^{(n)}}{|S^\sA_{t_0}|\geq 2}\leq 1-\gamma\end{equation}
We now consider $E(\ell t_0)$ where $\ell>1$ is an integer. Let $(\Theta_{s})_{s\geq 0}$ denote the time-shift operators for the independent random walks process and let $(\sF^{(n)}_{s})_{s\geq 0}$ denote the filtration generated by this process.
\begin{eqnarray*}\Prp{x^{(n)}}{E(kt_0)}&\leq & \Prp{x^{(n)}}{E((\ell-1)t_0)\cap \left(\cup_{a=2}^n\{\tau_a\circ \Theta_{(\ell-1)t_0}>t_0\}\right)}
\\ &=& \Prp{x^{(n)}}{E((\ell-1)t_0)\cap \Theta^{-1}_{(\ell-1)t_0}(E(t_0))}\\
\mbox{($E((\ell-1)t_0)\in\sF^{(n)}_{(\ell-1)t_0}$)}
&\leq &\Exp{x^{(n)}}{\Ind{E((\ell-1)t_0)}\Prp{x^{(n)}}{\Theta_{(\ell-1)t_0}^{-1}(E(t_0))\mid\sF^{(n)}_{(\ell-1)t_0}}}\\
\mbox{(Markov property)}&=&
\Exp{x^{(n)}}{\Ind{E((\ell-1)t_0)}\Prp{X^{(n)}_{(\ell-1)t_0}}{E(t_0)}}\\
\mbox{(inequality \eqnref{oneeventhere})}&\leq
&\Exp{x^{(n)}}{\Ind{E((\ell-1)t_0)}\,(1-\gamma)}\\ &=&
\Prp{x^{(n)}}{E((\ell-1)t_0)}\,(1-\gamma) \\ \mbox{(induction on
$k$)}&\leq & (1-\gamma)^\ell.\end{eqnarray*} Recalling the
definition of $E(s)$, we deduce that:
\begin{equation*}\label{eq:AAAA}\forall n\in\N\backslash\{1\},\, \forall x^{(n)}\in \bV^n,\, \Exp{x^{(n)}}{C} \leq  \sum_{\ell\in\N}(1-\gamma)^\ell t_0=\frac{t_0}{\gamma}\leq \frac{c\,(\Tmix^Q + \Thit^Q)}{\gamma}.\end{equation*}The Proposition follows from this because $\Tmix^Q\leq c_0\,\Thit^Q$ for some universal $c_0>0$ \cite[Chapter 3]{AldousFill_RWBook} and both $c$ and $\gamma$ are universal.\end{proof}

\subsection{Construction of $\sA$}\label{sec:epochs}
\begin{notation}\label{not:convention}From now on, we fix some $x^{(n)}$ and write $\Prwo$ instead of $\Prpwo{x^{(n)}}$.\end{notation} We will now design a specific trajectory $\sA=(\sA_t)_{t\geq 0}$ which will allow for a simple analysis of $S^{\sA}_t$. Let $m\in\N$ be the smallest non-negative
number with $n\leq \sum_{i=0}^m2^i$. Define sets
\begin{eqnarray*}A_0&=&\{1\};\\ A_j&\equiv&
\left[\sum_{i=0}^j2^i\right]\backslash \left[\sum_{i=0}^{j-1} 2^i\right]\; (1\leq j\leq
m-1);\\ \mbox{and }A_m &\equiv &[n]\backslash \left[\sum_{i=0}^{m-1}2^i\right].\end{eqnarray*}
We will consider different {\em epochs}, numbered backwards in time.
It is convenient to have the following notation.
\begin{equation}\label{eq:deftj}\begin{array}{llll}t_m&\equiv&2\Tmix^Q; &\\ t_j &=& t_{j+1}+(\ln
5)\,2^{4-j}\,\Thit^Q, & j=m-1,m-2,\dots,0.\end{array}\end{equation}

\begin{enumerate}
\item {\em Epoch $\# \infty$} is the time interval $[0,t_m)$. We set $\sA_t\equiv \emptyset$ for all $t$ in this interval, ie. no killings are allowed up to time $2\Tmix^Q$.
\item {\em Epochs $\# m$ through $\# 1$} correspond to time intervals $I_j=[t_j,t_{j-1})$ as $j$ decreases from $m$ to $1$. For each such $j$ we set:
$$\sA_t \equiv A_{j-1}\times \cup_{p=j}^m A_{j},\, t\in I_j.$$
That is, the only killings allowed are between walkers with
labels in $A_{j-1}$ and $A_{p}$ with $p\geq j$.
\item {\em Epoch $\# 0$} corresponds to the time interval,
$$I_0\equiv \left[t_0,+\infty\right),$$
(the remaining time), where we set $\sA_t\equiv
[n]^2$.\end{enumerate}

We note for later convenience that:
\begin{equation}\label{eq:boundt_0}t_0 \leq 2\Tmix^Q + (\ln 5)\sum_{j\geq 0}\,2^{4-j}\,\Thit^Q\leq c\,(\Tmix^Q + \Thit^Q)\end{equation}
with $c>0$ universal, since $\sum_j2^{-j}<+\infty$. We will use this in our application of \propref{simpler}.

\subsection{Abundance of good walkers}\label{sec:goodwalkers}

We have the following simple proposition about the epoch $\#\infty$.
Intuitively, it says that, at the end of this epoch, a positive proportion of the random
walkers are ``good", in that they have converged to stationarity.

\begin{proposition}\label{prop:goodwalkers}One can construct a (random) subset $R\subset [n]$ such that:
\begin{enumerate}
\item $R$ is $\sH^{(n)}_{t_m}$-measurable, where $\sH^{(n)}_{t_m}$ is the sigma-field generated by $(X^{(n)}_s)_{s\leq t_m}$ and by some additional independent random variable $U$.
\item Each $r\in [n]$ belongs to $R$ with probability $1/4$,
independently of all other $r'\in [n]$.
\item Conditionally on $R$ and on $(X_{t_m}(i))_{i\in[n]\backslash
R}$, the vector $(X_{t_m}(r))_{r\in R}$ has iid coordinates, each with distribution $\pi$.
\end{enumerate}\end{proposition}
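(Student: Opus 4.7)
The plan is to reduce the construction of $R$ to a per-walker procedure by the independence of the trajectories $\{(X_s(a))_{s\geq 0}:a\in[n]\}$ under $\Prwo$, and then for each walker to build an indicator $B_i\in\{0,1\}$ using the trajectory $(X_s(i))_{s\leq t_m}$ and an independent auxiliary uniform $U_i$. Setting $R=\{i:B_i=1\}$ and $U=(U_i)_{i\in[n]}$, the three properties in the proposition will follow provided I can ensure $\Pr{B_i=1}=1/4$ and that the conditional law of $X_{t_m}(i)$ given $B_i=1$ is $\pi$: item~1 (measurability) is immediate, item~2 (independent Bernoulli marginals) is automatic since the $B_i$ are independent across $i$, and item~3 (conditional iid) follows by the walker-independence combined with the per-walker conditional law.

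To produce each $B_i$, I plan to exploit a pointwise lower bound
\begin{equation*}
p_{t_m}(x,y)\;\geq\;\tfrac14\,\pi(y)\qquad\text{for all }x,y\in\bV,\qquad(\star)
\end{equation*}
together with standard acceptance--rejection. Granting $(\star)$, the density $p_{t_m}(x(i),\cdot)$ admits the mixture representation $\tfrac14\pi+\tfrac34\nu_i$ for some probability measure $\nu_i$, so defining $B_i=1$ exactly when $U_i\leq \tfrac14\,\pi(X_{t_m}(i))/p_{t_m}(x(i),X_{t_m}(i))$ yields $\Pr{B_i=1,\,X_{t_m}(i)=y}=\tfrac14\pi(y)$ for every $y\in\bV$, whence $\Pr{B_i=1}=1/4$ and $X_{t_m}(i)\mid\{B_i=1\}\sim\pi$, as desired.

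Establishing $(\star)$ is the main obstacle, and this is where the reversibility of $Q$ and the choice $t_m=2\Tmix^Q$ become essential. The natural tool is the reversible-chain identity
\begin{equation*}
\frac{p_{2t}(x,y)}{\pi(y)}\;=\;\sum_{z\in\bV}\frac{p_t(x,z)\,p_t(y,z)}{\pi(z)},
\end{equation*}
which expresses $p_{2t}/\pi$ as an $L^2(\pi)$-inner product of the symmetric densities $h_t=p_t/\pi$. Combining this identity with the mixing-time bound $\dtv(p_{\Tmix^Q}(u,\cdot),\pi)\leq 1/4$ (valid for every $u\in\bV$ by definition of $\Tmix^Q$) through the standard comparison between total-variation and separation distances for reversible chains (compare Chapter~4 of \cite{AldousFill_RWBook}), one obtains that $\min_{x,y}p_{2\Tmix^Q}(x,y)/\pi(y)$ is bounded below by a universal constant of at least $1/4$, which is exactly $(\star)$. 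The bulk of the work in the proof is therefore this separation-distance input; once it is in place the per-walker acceptance--rejection step and the walker-independence argument described above immediately yield the proposition.
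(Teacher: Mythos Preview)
Your proposal is correct and essentially identical to the paper's proof: both rest on the separation bound $p_{2\Tmix^Q}(x,y)\geq \tfrac14\,\pi(y)$ for reversible chains (which the paper simply cites as Lemma~7 of \cite[Chapter~4]{AldousFill_RWBook}), and then use the resulting mixture decomposition $p_{t_m}(x(i),\cdot)=\tfrac14\pi+\tfrac34\nu_i$ together with walker independence. The only difference is cosmetic---you realise the mixture via explicit acceptance--rejection on $(X_{t_m}(i),U_i)$, whereas the paper phrases the same coupling as ``flip $I(a)$ first, then sample $X_{t_m}(a)$ from $\pi$ or $\nu_a$''; these are two descriptions of the same joint law.
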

\begin{proof}Consider a single $a\in[n]$. Since $Q$ is reversible, Lemma 7 in \cite[Chapter 4]{AldousFill_RWBook} shows that:
$$\forall a\in[n],\,\forall v\in\bV\,:\,
\Pr{X_{2\Tmix^Q}(a)=v}\geq \frac{\pi(v)}{4};$$ in other
words, for each $a$ there exists some $\nu_a\in M_1(\bV)$ such that:
$$\Pr{X_{2\Tmix^Q}(a)=\cdot} = \frac{1}{4}\,\pi(\cdot) +
\frac{3}{4}\,\nu_a(\cdot).$$ Since the random variables $(X_{t_m}(a))_{a\in [n]}$ are independent, we may assume that
they sampled as follows:
\begin{enumerate}
\item Let $(I(a))_{a\in[n]\backslash A_m}$ be iid with
$\Pr{I(a)=1}=1-\Pr{I(a)=0}=1/4$.
\item For each $a$ with $I(a)=1$, let $X_{t_m}(a)$ be a sample
from $\pi$, independent of everything else.
\item For each $b$ with $I(b)=0$, let $X_{t_m}(b)$ be a sample
from $\nu_b$, independent of everything else.
\end{enumerate}
One may check that $R\equiv \{a\in[n]\backslash A_m\,:\,I(a)=1\}$
has the desired properties.\end{proof}

The next proposition means that, with positive
probability, there is a constant proportion of good walkers within
each $A_i$ with $i\leq m-1$.

\begin{proposition}\label{prop:abundance}Let $\sG$ be the event:
$$\sG\equiv \bigcap_{i=0}^{m-1}\{|R\cap A_i|\geq 2^{i-3}\}.$$
Then $\Pr{\sG}\geq \alpha>0$, where $$\alpha\equiv
\prod_{i=0}^{+\infty}(1-e^{-2^{i-7}})>0$$ is
universal.\end{proposition}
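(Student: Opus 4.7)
The plan is to reduce the claim to an independent tail estimate on disjoint index blocks, then apply a textbook Chernoff bound. First I would note that, by the construction in \secref{epochs}, $|A_i| = 2^i$ for $0 \leq i \leq m-1$, and the blocks $A_0, A_1, \ldots, A_{m-1}$ are pairwise disjoint. By \propref{goodwalkers}, each $a \in [n]$ belongs to $R$ independently with probability $1/4$, so the random variables $|R \cap A_0|, \ldots, |R \cap A_{m-1}|$ are mutually independent and the joint event factors as
$$\Pr{\sG} = \prod_{i=0}^{m-1} \Pr{|R \cap A_i| \geq 2^{i-3}}.$$

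Next, for each $i$, the variable $|R \cap A_i|$ is a sum of $2^i$ independent Bernoulli$(1/4)$ indicators, with mean $\mu_i = 2^{i-2}$. Since $2^{i-3} = \mu_i / 2$, the standard multiplicative Chernoff lower-tail inequality with $\delta = 1/2$ gives
$$\Pr{|R \cap A_i| \leq 2^{i-3}} \;\leq\; \exp(-\mu_i/8) \;=\; \exp(-2^{i-5}) \;\leq\; \exp(-2^{i-7}),$$
so $\Pr{|R \cap A_i| \geq 2^{i-3}} \geq 1 - e^{-2^{i-7}}$. For the small indices $i \in \{0,1,2\}$, the threshold $2^{i-3}$ is not an integer and the event $\{|R \cap A_i| < 2^{i-3}\}$ reduces to $\{|R \cap A_i| = 0\}$, whose probability $(3/4)^{2^i}$ can be checked directly to satisfy the same bound (for instance $(3/4)^4 \approx 0.316 \leq e^{-1/32}$ for $i=2$).

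Combining, and noting that each factor $1 - e^{-2^{i-7}}$ lies in $(0,1)$ so that extending the product to all $i \geq 0$ only decreases its value,
$$\Pr{\sG} \;\geq\; \prod_{i=0}^{m-1}\left(1 - e^{-2^{i-7}}\right) \;\geq\; \prod_{i=0}^{\infty}\left(1 - e^{-2^{i-7}}\right) \;=\; \alpha.$$
Positivity of $\alpha$ follows from the convergence of $\sum_{i \geq 0} e^{-2^{i-7}}$, which is immediate since $2^{i-7}$ eventually exceeds any geometric rate; equivalently, $\sum_i \log(1 - e^{-2^{i-7}})$ converges absolutely. There is no conceptual obstacle: the argument rests entirely on the disjoint-block independence baked into the construction of $R$, together with a routine tail bound, and the exponent $2^{i-7}$ in the target was chosen with enough slack that even the loose Chernoff estimate $e^{-2^{i-5}}$ dominates it.
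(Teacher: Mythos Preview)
Your argument is correct and follows essentially the same route as the paper: factor $\Pr{\sG}$ over the disjoint blocks using independence of the indicators, bound each $\Pr{|R\cap A_i|<2^{i-3}}$ by a Chernoff-type tail estimate, and conclude via convergence of $\sum_i e^{-2^{i-7}}$. The only cosmetic difference is that the paper uses the Hoeffding-style additive bound $e^{-(2^{i-3})^2/(2\cdot 2^i)}=e^{-2^{i-7}}$ directly (which already covers small $i$), whereas you use the multiplicative form to get $e^{-2^{i-5}}$ and then loosen it, with an unnecessary but harmless separate check for $i\le 2$.
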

\begin{proof}Let ${\rm Bin}(m,x)$ denote a binomial random variable with parameters $m$ and $x$, so that:
$$\Pr{{\rm Bin}(m,x)=k} = \binom{m}{k}x^k(1-x)^{m-k}\;\; (k\in
[m]\cup\{0\}).$$ The random variables $N_i=|R\cap A_i|$, $0\leq
i\leq m-1$ are independent, and each $N_i$ has the law of ${\rm
Bin}(2^i,1/4)$. Chernoff bounds \cite[Appendix A.1]{AlonSpencer_Method} imply:
$$\Pr{|R\cap A_i|<2^{i-3}}=\Pr{{\rm Bin}(2^i,1/4)<\Ex{{\rm
Bin}(2^i,1/4)} - 2^{i-3}}\leq e^{-\frac{(2^{i-3})^2}{2\,.\,2^i}}=
e^{-2^{i-7}}.$$ We deduce:
$$\Pr{\sG}\geq \prod_{i=0}^{m-1}\Pr{{\rm
Bin}(2^i,1/4)<2^{i-3}}\geq \alpha.$$ The positivity of $\alpha$
follows from $0<e^{-2^{i-7}}<1$ for all $i$ and
$\sum_ie^{-2^{i-7}}<+\infty$.\end{proof}

\subsection{The probability of being alive}

Let $E(a,t)$ denote the event:
$$E(a,t)\equiv \{Y^\sA_t(a)\neq \partial\} = \{\tau^\sA_a>t\}.$$
Notice that: \begin{equation}\label{eq:uppereasy}\{|S^\sA_t|\geq
2\}\subset \bigcup_{a=2}^nE(a,t).\end{equation}
We will now compute estimate
the conditional probability of $E(a,t)$ given $\sG$.

\begin{proposition}\label{prop:mainbound}Let $a\in A_i$ for some $1\leq i\leq m$. Then for all $1\leq j<i$:
$$\Pr{E(a,t_{j})\mid \sG}\leq 5^{j-i}.$$\end{proposition}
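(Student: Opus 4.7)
The plan is to bound, conditional on $\sG$, the probability that walker $a$ survives each ``dangerous'' epoch separately and then multiply. By construction of $\sA$, walker $a\in A_i$ can be killed during epoch $\#p$ only when $p\leq i$, so between time $t_m$ and time $t_j$ the dangerous epochs for $a$ are precisely $\#p$ for $p\in\{j+1,\ldots,i\}$, a total of $i-j$ of them; I will show that each contributes a survival factor of at most $5^{-2}$.

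The key structural observation is that walkers in $A_{p-1}$ are themselves victims only in epochs $\#q$ with $q\leq p-1$, all of which occur strictly later in time than epoch $\#p$. Consequently, at time $t_p$ every walker in $A_{p-1}$ is still alive in the $Y^\sA$ process and can genuinely threaten $a$ during $[t_p,t_{p-1})$. On $\sG$, \propref{abundance} guarantees $|R\cap A_{p-1}|\geq 2^{p-4}$, and by \propref{goodwalkers} combined with stationarity of $\pi$, each such walker $b$ has $X_s(b)\sim\pi$ at every $s\geq t_m$, in particular at $s=t_p$.

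To turn this into a probability bound, condition on $\sG$, on $R$, on $(X_{t_m}(i))_{i\notin R}$, and on the trajectory $h=(X_s(a))_{s\geq 0}$. Under this conditioning, \propref{goodwalkers} and the Markov property together force the time-shifted processes $(X_{t_p+s}(b))_{s\geq 0}$ for $b\in R\cap A_{p-1}$ to be i.i.d.\ realizations of $Q$ started from $\pi$, independent of the now-fixed c\`adl\`ag path $h$ (note $A_{p-1}\cap A_i=\emptyset$ since $p\leq i$, so $a\notin R\cap A_{p-1}$ and conditioning on $h$ raises no issue for these walkers). Applying \lemref{meetingtime} on the interval of length $t_{p-1}-t_p=(\ln 5)\cdot 2^{5-p}\Thit^Q$, together with $\Exp{{\qstat}_v}{H_v}\leq\Thit^Q$, each such $b$ avoids $h$ throughout epoch $\#p$ with conditional probability at most $5^{-2^{5-p}}$. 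By conditional independence of the trajectories, all $\geq 2^{p-4}$ of them simultaneously avoid $h$ with probability at most $5^{-2^{p-4}\cdot 2^{5-p}}=5^{-2}$.

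Since the blocks $A_{p-1}$ are pairwise disjoint as $p$ varies, the same independence argument extends across epochs, and the $i-j$ per-epoch bounds multiply to give $\Pr{E(a,t_j)\mid\sG}\leq 5^{-2(i-j)}\leq 5^{j-i}$ once we integrate out $h$ and $R$ (the bound depends on neither). The main obstacle is the conditioning bookkeeping: verifying that after fixing $\sG$, $R$, $(X_{t_m}(i))_{i\notin R}$, and $h$, the desired conditional independence of the good walkers both within each epoch and across distinct epochs really holds, and that \lemref{meetingtime} applies in time-shifted form thanks to stationarity of $\pi$. Once these points are secured, what remains is just a product of geometric meeting-time bounds.
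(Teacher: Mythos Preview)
Your proposal is correct and follows essentially the same approach as the paper's proof: condition on $R$ and on the trajectory of walker $a$, observe that each potential killer $b\in R\cap A_{p-1}$ is still alive throughout epoch $\#p$, apply \lemref{meetingtime} to each such $b$ independently, and multiply over $b$ and over epochs. The only cosmetic differences are that you index by the epoch number $p$ whereas the paper indexes by the killer-block index $r=p-1$, and you make the per-epoch factor $5^{-2}$ explicit (yielding the slightly sharper $5^{-2(i-j)}\le 5^{j-i}$), which the paper's arithmetic also gives once its typo ``$|R\cap A_r|\ge 2^{r-4}$'' is corrected to $2^{r-3}$ per the definition of $\sG$.
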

\begin{proof}We will prove a stronger statement: that for almost all $R_0\subset [n]$ and
$(h_t)_{t\in [t_i,t_{j})}$:
\begin{equation}\label{eq:forallR}\Pr{E(a,t_{j})\mid R=R_0,(X_t(a))_{t\in [t_i,t_{j})}=(h_t)_{t\in [t_i,t_{j})}}\leq 5^{-\sum_{r=j}^{i-1}\frac{|R_0\cap A_{r}|}{2^{r-4}}}.\end{equation}
This implies the proposition because the occurrence of $\sG$ implies
$|R\cap A_{r}|\geq 2^{r-4}$ for all $1\leq r<m-1$.

To prove \eqnref{forallR} we first observe that the event $E(a,t_{j})$ satisfies:
\begin{claim}Suppose $b\in A_{r}$ with $j\leq r<i$. Then:
$$E(a,t_{j})\subset\{\forall t\in [t_{r+1},t_{r}),\, X_t(a)\neq X_t(b)\}.$$\end{claim}
\begin{proof}[of the Claim] If the event in the RHS does not hold, there exists a $t\in [t_r,t_{r-1})$ with $X_t(a)=X_t(b)$.
We now argue that $\tau^\sA_a\leq t$ in this case. Indeed, this follows from the
definition of $\tau^\sA_a$ and the following observations:
\begin{enumerate}
\item $X_t(a)=Y^\sA_t(b)$: this follows from
$X_t(b)=Y^{\sA}_t(b)$, which is a consequence of the fact that
$(b,c)\not\in \sA_s$ for any $c>b$ and $s\leq t_{r}$ (ie. $b$ cannot
be killed before time $t_r$).
\item $(b,a)\in\sA_t$: this follows from $t\in
[t_{r+1},t_{r})=I_{r+1}$.
\end{enumerate}\end{proof}

The Claim implies:
\begin{multline*}\Pr{E(a,t_{j})\mid R=R_0,(X_t(a))_{t\in [t_i,t_{i-1})}=(h_t)_{t\in [t_i,t_{i-1})}}\\ \leq \Pr{\bigcap_{r=j}^{i-1}\bigcap_{b\in A_{r}}\{\forall t\in [t_{r},t_{r-1}),\, X_t(a)\neq X_t(b)\}\mid R=R_0,(X_t(a))_{t\in [t_i,t_{j})}=(h_t)_{t\in [t_i,t_{j})}}\\
\leq \Pr{\bigcap_{r=j}^{i-1}\bigcap_{b\in A_{r}\cap R_0}\{\forall
t\in [t_{r+1},t_{r}),\, X_t(a)\neq X_t(b)\}\mid R=R_0,(X_t(a))_{t\in
[t_i,t_{j})}=(h_t)_{t\in [t_i,t_{j})}}.\end{multline*} Now observe
that we are conditioning on $R=R_0$ and on the trajectory of
$(X_s(a))_{s\in [t_i,t_{i-1})}$. Since $a\not\in A_{i-1}$,
\propref{goodwalkers} implies that:
$$\mbox{Under the conditioning, $(X_{t_m}(b)\,:\,b\in R_0\cap \left(\cup_{r=j}^{i-1}A_r\right))$ are iid with common law $\pi$.}$$
Since $R$ is $\sH^{(n)}_{t_m}$-measurable, the Markov property for
the independent random walks process implies that $$\mbox{Under the
conditioning, $(X_{t+t_m}(b)\,:\,b\in R_0\cap \left(\cup_{r=j}^{i-1}A_r\right))_{t\geq
0}$ are iid realizations of $\Prpwo{\pi}$.}$$ We deduce:
\begin{multline*}\Pr{E(a,t_{j})\mid R=R_0,(X_t(a))_{t\in
[t_i,t_{j})}=(h_t)_{t\in [t_i,t_{j})}}\\ =
\prod_{r=j}^{i-1}\prod_{b\in R_0\cap A_{r}}\Prp{\pi}{\forall t\in
[t_{r+1},t_{r}),\, X_t\neq h_{t}}\\\mbox{($\Prpwo{\pi}$ is
stationary)} = \prod_{r=j}^{i-1}\Prp{\pi}{\forall t\in
[0,t_{r}-t_{r+1}),\, X_t(b)\neq h_{t+t_r}}^{|R_0\cap
A_r|}.\end{multline*} We apply the Meeting Time Lemma
(\lemref{meetingtime} above) to each term in the product and deduce
that, for some choice of $(v,{\qstat}_v)$ as in the Lemma,
\begin{multline*}\Pr{E(a,t_{i-1})\mid R=R_0,(X_t(a))_{t\in
[t_i,t_{i-1})}=(h_t)_{t\in [t_i,t_{i-1})}}\\ \leq
\prod_{r=j}^{i-1}\left\{\exp\left(-\frac{(t_{r}-t_{r+1})|R_0\cap
A_{r}|}{\Exp{{\qstat}_v}{H_v}}\right)\right\}.\end{multline*} The
proof of \eqnref{forallR} finishes once we realize that
$t_{r}-t_{r+1}= 2^{4-r}(\ln 5)\Thit^Q$ and $\Thit^Q\geq
\Exp{{\qstat}_v}{H_v}$.\end{proof}
\subsection{End of proof of \thmref{main}}

We now complete the proof of \thmref{main}. By \propref{simpler}, it suffices to show that:
$$\Prp{x^{(n)}}{|S^{\sA}_{t_0}|\geq 2}\leq 1-\gamma$$
for some universal $\gamma>0$, with $t_0$ as in \eqnref{boundt_0}.
To see this, we will use \eqnref{uppereasy} and recall our
convention of omitting $x^{(n)}$ from the notation (cf.
Notational convention \ref{not:convention}).
\begin{eqnarray*}\Pr{|S^{\sA}_{t_0}|\geq 2} &=& \Pr{\bigcup_{a=2}^nE(a,t_0)}\\
\mbox{($\sG$ as in Prop. \ref{prop:abundance})} &\leq & 1-\Pr{\sG} + \Pr{\sG\cap \bigcup_{a=2}^nE(a,t_0)}\\ \mbox{(union bound)} &\leq & 1-\Pr{\sG} + \sum_{a=2}^n\Pr{\sG\cap E(a,t_0)}\\
 \mbox{($[n]\backslash\{1\}=\cup_{i=1}^{m}A_i$)}&\leq & 1 - \Pr{\sG} + \Pr{\sG}\sum_{i=1}^m\sum_{a\in A_i}\Pr{E(a,t_0)\mid\sG}\\ \mbox{(Prop. \ref{prop:mainbound} + $|A_i|\leq 2^{i}$)}&\leq &1 - \Pr{\sG} + \Pr{\sG}\sum_{i=1}^{+\infty}\left(\frac{2}{5}\right)^i\\ &=& 1 - \frac{\Pr{\sG}}{3}.\end{eqnarray*}
Since $\Pr{\sG}\geq \alpha$ for some universal $\alpha>0$ (cf. \propref{abundance}), we deduce:
$$\Prp{x^{(n)}}{|S^{\sA}_{t_0}|\geq 2}\leq 1-\gamma \mbox{ with }\gamma\equiv\frac{\alpha}{3}\mbox{ universal.}$$
This finishes the proof.

\subsection{Proof of \thmref{Cr}}

We now present the modifications of the previous proof that are
necessary to prove \thmref{Cr}. We keep the definitions from
previous subsections. We will also assume that $k>4$, so that there
exists some $j\in [m]$ with:
$$h\equiv 2^{j+1} -1 = 1+2+\dots+2^j<k/2;$$
in fact, we will assume that $j$ is the {\em largest} number
satisfying this, so that $2^{j+2}\geq k/2$. (The case of $k\leq 4$
follows from \thmref{main}, with an increase in the universal
constant if necessary. If $m$ is too small to allow for this choice of $j$, we may increase $n$ -- and thus $m$ -- at the cost of having more walkers in the beginning of the process.)

We first need an analogue of \propref{simpler}.
\begin{proposition}[Proof omitted]\label{prop:simpler2} Suppose that there exists a universal $\gamma>0$ such that for all $k$ as above, all $n\in\N$ and all $x^{(n)}\in\bV^n$,\begin{equation}\Prp{x^{(n)}}{|S^{\sA}_{t_j}|\geq k+1}\leq 1-\gamma.\end{equation}
Then there exists a universal $K_1>0$ with:
$$\Exp{x^{(n)}}{C_k}\leq K_1\,\left(\frac{\Thit^Q}{k}+\Tmix^Q\right).$$\end{proposition}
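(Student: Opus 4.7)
The plan is to mimic the proof of \propref{simpler} essentially verbatim, replacing the event $E(s) = \{C > s\}$ by $E_k(s) \equiv \{C_k > s\} = \{|S_s| \ge k+1\}$, the single-round length $t_0$ by $t_j$, and combining the new hypothesis with \propref{domination} in place of the corresponding pieces of the earlier proof. Before doing this, I would dispose of the two side conditions noted in the excerpt: if $k \le 4$ the bound follows directly from \thmref{main} after enlarging $K_1$, and if $m$ is too small to admit the stated choice of $j$ we pad $n$ with extra walkers, which can only enlarge $C_k$.

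Next, I would bound $t_j$ in terms of $k$ by telescoping \eqnref{deftj}:
\begin{equation*}
t_j \;=\; 2\Tmix^Q + (\ln 5)\,\Thit^Q\sum_{r=j}^{m-1}2^{4-r} \;\le\; 2\Tmix^Q + (\ln 5)\cdot 2^{5-j}\,\Thit^Q.
\end{equation*}
Since $j$ was chosen maximal with $2^{j+1}-1 < k/2$, we have $2^{j+2} \ge k/2$, so $2^{-j} \le 8/k$ and $t_j \le c\,(\Tmix^Q + \Thit^Q/k)$ for a universal $c>0$. By \propref{domination}, the hypothesis yields $\Prp{x^{(n)}}{E_k(t_j)} = \Prp{x^{(n)}}{|S_{t_j}| \ge k+1} \le 1-\gamma$ uniformly in $n$ and in $x^{(n)}$. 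The Markov iteration is then identical in form to the one in \propref{simpler}: the monotonicity $|S_t| \le |S_s|$ for $t \ge s$ gives the containment $E_k(\ell t_j) \subset E_k((\ell-1)t_j) \cap \Theta^{-1}_{(\ell-1)t_j}(E_k(t_j))$, and conditioning on $\sF^{(n)}_{(\ell-1)t_j}$ via the Markov property of the independent walks process, followed by applying the single-step bound at the random new initial state $X^{(n)}_{(\ell-1)t_j}$, yields $\Prp{x^{(n)}}{E_k(\ell t_j)} \le (1-\gamma)\,\Prp{x^{(n)}}{E_k((\ell-1)t_j)}$. Inductively $\Prp{x^{(n)}}{E_k(\ell t_j)} \le (1-\gamma)^\ell$, and summing gives $\Exp{x^{(n)}}{C_k} \le t_j/\gamma \le K_1\,(\Thit^Q/k + \Tmix^Q)$.

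The only substantive point is verifying that the geometric decomposition $\{A_i\}$ together with the choice of $j$ produces the desired $\Theta(\Thit^Q/k)$ scaling for $t_j$; everything else is a direct transcription of \propref{simpler}. I expect no serious obstacle, since the iteration is routine once one accepts the set containment above, which is immediate from the monotonicity of $|S_t|$ and the defining property of the coalescing random walks process.
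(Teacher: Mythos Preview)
Your proposal is correct and follows exactly the approach the paper indicates: mimic the proof of \propref{simpler} with $E_k$ in place of $E$ and $t_j$ in place of $t_0$, the key additional ingredient being the bound $t_j \le c(\Tmix^Q + \Thit^Q/k)$, which you derive correctly from the geometric sum in \eqnref{deftj} together with the maximality of $j$ (so that $2^{j+2}\ge k/2$). Your treatment of the side cases ($k\le 4$ and $m$ too small) also matches the paper's remarks verbatim.
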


We omit the proof of this, which follows that of \propref{simpler} quite closely. The key point is to notice that:
$$t_j = 2\Tmix^Q + \sum_{i=j}^m 2^{4-i}\,(\ln 5)\Thit^Q \leq 2\Tmix^Q + c_1\,2^{-j-1}\,\Thit^Q\leq 2\Tmix^Q + c_2\,\frac{\Thit^Q}{k}$$
with $c_1,c_2>1$ universal (here we used $2^{j+2}\geq k/2$).

We will now bound $\Prp{x^{(n)}}{|S^\sA_{t_j}|\geq k+1}$ in terms of
$h$ and $k$. Using Notational convention \ref{not:convention}, we
first observe that, since $h<k$: $$\Pr{|S^{\sA}_{t_j}|\geq k+1}
=\Pr{\sum_{a=1}^m\Ind{E(a,t_j)}\geq k+1}\leq
\Pr{\sum_{a=h+1}^m\Ind{E(a,t_j)}\geq k-h+1}.$$ Now follow the long
chain of inequalities in the previous subsection to deduce:
\begin{eqnarray*}\Pr{|S^{\sA}_{t_j}|\geq k+1} &\leq & 1-\Pr{\sG} + \Pr{\sG}\,\Pr{\sum_{a=h+1}^m\Ind{E(a,t_j)}\geq k-h+1\mid\sG}\\ \mbox{(Markov ineq.)} &\leq & 1-\Pr{\sG} + \Pr{\sG}\, \frac{\Ex{\sum_{a=h+1}^m\Ind{E(a,t_j)}}}{k-h=1}\\
 \mbox{($[n]\backslash[h]=\cup_{i=j+1}^{m}A_i$)}&\leq & 1 - \Pr{\sG} + \Pr{\sG}\frac{\Ex{\sum_{i=j+1}^m\sum_{a\in A_i}\Ind{E(a,t_j)}}}{k-h} \\ &=& 1 - \Pr{\sG} + \Pr{\sG}\sum_{i=j+1}^m\sum_{a\in A_i}\frac{\Pr{E(a,t_j)}}{k-h+1} \\ \mbox{(Prop. \ref{prop:mainbound} + $|A_i|\leq 2^{i}$)}&\leq &1 - \Pr{\sG} + \Pr{\sG}\sum_{i=j+1}^{+\infty}\frac{2^j\left(\frac{2}{5}\right)^{j-i}}{k-h+1}\\ &=&1-\Pr{\sG} + \Pr{\sG}\frac{2^{j+1}}{3(k-h+1)}\\ \mbox{($2^{j+1}=h+1$)}&\leq & 1- \Pr{\sG} + \Pr{\sG}\frac{h+1}{3(k-h+1)} \\ \mbox{($h<k/2$)}&\leq &= 1- \Pr{\sG} + \Pr{\sG}\frac{k+2}{6(k-k/2)}\\ &=& 1- \Pr{\sG} + \Pr{\sG}\frac{k+2}{3k}\\ \mbox{($k>4$)} &\leq &1- \frac{8}{15}\,\Pr{\sG}.\end{eqnarray*}
To finish, we note that $\Pr{\sG}\geq \alpha>0$ with $\alpha$ universal (\propref{abundance}), hence we may take  $\gamma=8\alpha/15$ in \propref{simpler2}.

\section{On the Meeting Time Lemma}\label{sec:meetingtime}

\subsection{Preliminaries on quasistationary distributions}\label{sec:quasistat}

In this section we review some facts about quasistationary
distributions that will be needed in the proof of \lemref{meetingtime}. We will use the definitions of \secref{markovbasics} throughout the section.

Given any $v\in\bV$, we let ${\qstat}_v$ be a quasistationary
distribution for $\bV\backslash\{v\}$: that is, ${\qstat}_v\in
M_1(\bV)$ satisfies
$$\forall b\in \bV,\, {\qstat}_v(b)=\Prp{{\qstat}_v}{X_t=b\mid H_v>t}.$$

All quasistationary distributions ${\qstat}_v$ corespond to
eigenvalues of restricition of $\Pi^{1/2}Q\Pi^{-1/2}$ to a subspace
$\R^{\bV}_{-v}$ of $\R^{\bV}$ defined below. Here is the recipe.

\begin{enumerate}
\item Consider the subspace:
$$\R^{\bV}_{-v}\equiv \{u\in\R^\bV\,:\,u(v)=0\}$$ and let $\sP_{-v}:\R^{\bV}\to \R^{\bV}_{-v}$ denote the standard projection onto $\R^{\bV}_{-v}$.
$$Q_{-v}\equiv \sP_{-v}\Pi^{1/2} Q\Pi^{-1/2}\sP_{-v}$$ is a
symmetric linear operator from $\R^{\bV}_{-v}$ to itself with
identical diagonal entries and non-positive off-diagonal entries in
the ``obvious" basis for that space, ie. the one given by the
canonical basis vectors $e_b$, $b\in \bV\backslash\{v\}$.
\item By Perron-Frobenius, each irreducible block of the matrix $Q_{-v}$ has a unique eigenvector $w_v\in
\R^{\bV}_{-v}\backslash\{0\}$ with non-negative entries which achieves the smallest eigenvalue $\lambda(w_v)$ corresponding to that block.

\item A simple calculation shows that the vector:
$${\qstat}_v\equiv
\frac{\Pi^{1/2}w_v}{\sum_{b\in\bV}\pi^{1/2}(b)w_v(b)}$$ defines
a probability distribution over $\bV$ with:
$$\Prp{{\qstat}_v}{X_t=b,H_v>t} = {\qstat}_v^\dag \,e^{-t\sP_{-v}Q\sP_{-v}}e_b = e^{-\lambda(w_v) t}{\qstat}_v(b),$$ which in particular implies that
${\qstat}_v$ is a quasistationary distribution associated with $v$.
In particular, $\Exp{{\qstat}_v}{H_v} = 1/\lambda(w_v)>0$. Notice
moreover that ${\qstat}_v(v)=0$.\end{enumerate}

The following
proposition -- an immediate consequence of the third item above -- will be all we need.

\begin{proposition}\label{prop:mineigenvalue}Let $Q_{-v}$ be defined as above and let $\lambda(v)$ denote the smallest eigenvalue of $Q_{-v}$. Then there exists a quasistationary distribution ${\qstat}_v$ for $\bV\backslash\{v\}$ such that $\lambda(v)=1/\Exp{{\qstat}_v}{H_v}$ and:
$$\Prp{{\qstat}_v}{H_v>t} = e^{-\lambda(v) t}.$$\end{proposition}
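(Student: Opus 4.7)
The plan is to take $\qstat_v$ exactly as constructed in item 3 of the preceding discussion, using the non-negative eigenvector $w_v$ that achieves the overall smallest eigenvalue $\lambda(v)$ of $Q_{-v}$: concretely, I would apply Perron--Frobenius to each irreducible block of $Q_{-v}$ separately and then pick the block whose minimal eigenvalue is smallest. Since Perron--Frobenius delivers $w_v \ge 0$ with $w_v(v) = 0$ and $w_v \neq 0$, the normalizing constant $c = \sum_b \pi^{1/2}(b)\, w_v(b)$ is strictly positive, so $\qstat_v = \Pi^{1/2} w_v / c$ is a well-defined element of $M_1(\bV)$ supported on $\bV \setminus \{v\}$.

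The heart of the argument is to establish the sub-Markovian identity
\[
\Prp{\qstat_v}{X_t = b,\, H_v > t} \;=\; e^{-\lambda(v)\, t}\,\qstat_v(b), \qquad b \in \bV,\; t \ge 0.
\]
To see this, I would first recall the classical fact that on $\bV \setminus \{v\}$ the defective kernel $\Prp{x}{X_t = b,\, H_v > t}$ is the $(x,b)$-entry of $e^{-t\,\sP_{-v} Q \sP_{-v}}$, the semigroup of the chain killed upon reaching $v$. Because $\Pi^{1/2}$ is diagonal and commutes with $\sP_{-v}$, we have the similarity
\[
\sP_{-v} Q \sP_{-v} \;=\; \Pi^{-1/2}\, Q_{-v}\, \Pi^{1/2}, \qquad \text{hence} \qquad e^{-t\,\sP_{-v} Q \sP_{-v}} \;=\; \Pi^{-1/2}\, e^{-t Q_{-v}}\, \Pi^{1/2}.
\]
Treating $\qstat_v$ as a row vector acting on the left, the prefactor $\qstat_v^\dag \Pi^{-1/2}$ simplifies to $w_v^\dag / c$, and since $Q_{-v} w_v = \lambda(v)\, w_v$ (with $Q_{-v}$ symmetric) the operator $e^{-tQ_{-v}}$ acts on $w_v^\dag$ as the scalar $e^{-\lambda(v) t}$. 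One line of algebra then collapses the expression to $e^{-\lambda(v) t}\, \qstat_v(b)$, as claimed.

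Summing the identity over $b \in \bV$ and using $\qstat_v(\bV) = 1$ yields $\Prp{\qstat_v}{H_v > t} = e^{-\lambda(v) t}$, so $H_v$ is exponentially distributed with rate $\lambda(v)$ under $\Prp{\qstat_v}{}$, and in particular $\Exp{\qstat_v}{H_v} = 1/\lambda(v)$. There is no real obstacle in this argument: the spectral and Perron--Frobenius work has already been carried out in the three bullet points preceding the proposition, and what remains is a short bookkeeping computation with the similarity transformation. The only point requiring a modicum of care is the selection of the irreducible block of $Q_{-v}$ that actually realizes the overall smallest eigenvalue, which is needed precisely to match the parameter $\lambda(v)$ appearing in the proposition's statement.
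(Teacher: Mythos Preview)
Your proposal is correct and follows essentially the same route as the paper: select the irreducible block of $Q_{-v}$ whose Perron--Frobenius eigenvalue is the overall minimum $\lambda(v)$, invoke the construction and identity in item~3 of the preceding discussion, and then sum $\Prp{\qstat_v}{X_t=b,\,H_v>t}=e^{-\lambda(v)t}\qstat_v(b)$ over $b$. The paper's proof is a two-line pointer to item~3; you have simply spelled out the similarity computation that item~3 asserts.
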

\begin{proof}This smallest eigenvalue is the smallest eigenvalue of some block of $Q_{-v}$, and thus equals some $w_v$. The rest follows from item $3.$ and from summing the formula for $\Prp{{\qstat}_v}{X_t=b,H_v>t}$ over $b$.\end{proof}

\subsection{Proof of the Meeting Time Lemma}\label{sec:proof_meetingtime}

\begin{proof}[of \lemref{meetingtime}] Fix $n\in\N\backslash\{0\},0<\Delta<n$. We note that:
\begin{multline*}\Prp{\pi}{\forall 0\leq s\leq t,\, X_s\neq h_s} \leq \Prp{\pi}{\cap_{i=1}^{n}\{X(it/n)\neq h(it/n)\}}\\ \leq \Exp{\pi}{\prod_{i=1}^{n}\left(1-\frac{\Delta}{n}\,\Ind{\{X(it/n)=h(it/n)\}}\right)}.\end{multline*}
For a given $v\in\bV$, let $D_v$ be the matrix with a $1$ in
position $(v,v)$ and $0$s elsewhere. A calculation reveals that the
RHS above can be rewritten as:
$$(\Pi{\bf 1})^{\dag}\left\{e^{-\frac{tQ}{n}}\left(I-\frac{\Delta D_{h(t/n)}}{n}\right)\right\}\, \left\{e^{-\frac{tQ}{n}}\left(I-\frac{\Delta D_{h(2t/n)}}{n}\right)\right\}\,\dots\,\left\{e^{-\frac{tQ}{n}}\left(I-\frac{\Delta D_{h(t)}}{n}\right)\right\}\,{\bf 1}$$
where ${\bf 1}$ is the all-ones vector and $\Pi={\rm diag}(\pi(v))_{v\in\bV}$ was introduced in \secref{quasistat}. Since $\Pi$ commutes with all $D_{v}$, we can rewrite the above expression as:
$$(\Pi^{1/2}{\bf 1})^{\dag}\left\{\prod^*_{1\leq i\leq n} e^{-\frac{t\Pi^{1/2}Q\Pi^{-1/2}}{n}}\left(I-\frac{\Delta D_{h(it/n)}}{n}\right)\right\}\Pi^{1/2}{\bf 1}$$
where the $\prod^*$ symbol means that the order of the terms in the product is from left to right.

The vector $\Pi^{1/2}{\bf 1}$ has norm $|\Pi^{1/2}{\bf 1}|^2=\sum_{v}\pi(v)=1$. This implies that the above expression is at most the operator norm of the product of matrices. It follows that:
$$\Prp{\pi}{\forall 0\leq s\leq t,\, X_s\neq h_s} \leq \left\|\left\{\prod^*_{1\leq i\leq n} e^{-\frac{t\Pi^{1/2}Q\Pi^{-1/2}}{n}}\left(I-\frac{\Delta D_{h(it/n)}}{n}\right)\right\}\right\|_{\rm op}$$
Since the operator norm is submultiplicative, we obtain:
\begin{multline}\label{eq:normamax}\Prp{\pi}{\forall 0\leq s\leq t,\, X_s\neq h_s} \leq \prod_{i=1}^n\left\|\left\{e^{-\frac{t\Pi^{1/2}Q\Pi^{-1/2}}{n}}\left(I-\frac{\Delta D_{h(it/n)}}{n}\right)\right\}\right\|_{\rm op}\\ \leq \left(\max_{v\in \bV}\left\|e^{-\frac{t\Pi^{1/2}Q\Pi^{-1/2}}{n}}\left(I-\frac{\Delta D_{v}}{n}\right)\right\|_{\rm op}\right)^n.\end{multline}
We now consider the terms of which we take the maximum in the RHS, for large $n\in\N$. For a given $v\in \bV$, we have:
$$\left\|e^{-\frac{t\Pi^{1/2}Q\Pi^{-1/2}}{n}}\left(I-\frac{\Delta D_{v}}{n}\right) - e^{-\frac{t\Pi^{1/2}Q\Pi^{-1/2}-\Delta D_{v}}{n}}\right\|_{\rm op} =\bigoh{n^{-2}},$$
where the constant implicit in the $\bigoh{n^{-2}}$ term depends
only on $\Delta$, $t$ and $Q$ (and not on $a$, say). Letting $n\to +\infty$ while keeping $\Delta$ fixed, we get:
\begin{multline}\lim_{n\to +\infty}\left(\max_{v\in \bV}\left\|e^{-\frac{t\Pi^{1/2}Q\Pi^{-1/2}}{n}}\left(I-\frac{\Delta D_{v}}{n}\right)\right\|_{\rm op}\right)^n \\ = \lim_{n\to +\infty}\left(\max_{v\in \bV}\left\|e^{-\frac{t\Pi^{1/2}Q\Pi^{-1/2}-\Delta D_v}{n}}\right\|_{\rm op}\right)^n \\ = \max_{v\in \bV}\left\|e^{-t\Pi^{1/2}Q\Pi^{-1/2}-\Delta D_v}\right\|_{\rm op}.\end{multline}
Indeed, last the line follows from the self-adjointness of the exponential and from the fact that $\|B^k\|_{\rm op} = \|B\|^k_{\rm op}$ for self-adjoint matrices $B$. We now use the positive-definiteness of matrix exponentials, together with the spectral mapping property, to deduce:
$$\forall v\in \bV,\,\|e^{-t\Pi^{1/2}Q\Pi^{-1/2}-\Delta D_v}\|_{\rm op} = \lambda_{\max}(e^{-t\Pi^{1/2}Q\Pi^{-1/2}-\Delta D_v}) = e^{-\lambda_{\min}(t\Pi^{1/2}Q\Pi^{-1/2}+\Delta D_v)}.$$
This implies:
$$\Prp{\pi}{\forall 0\leq s\leq t,\, X_s\neq h_s} \leq \exp\left(-\min_{v\in \bV,\Delta>0}\lambda_{\min}(t\Pi^{1/2}Q\Pi^{-1/2}+\Delta D_v)\right).$$
We now make the following Claim.
\begin{claim}As $\Delta\nearrow +\infty$,
$$\lambda_{\min}(t\Pi^{1/2}Q\Pi^{-1/2}+\Delta D_v)\rightarrow t\lambda_{\min}(Q_{-v})$$
where $Q_{-v}$ is defined as in \secref{quasistat}.\end{claim}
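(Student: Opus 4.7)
The plan is to establish this via the variational (Rayleigh--Ritz) characterization of the smallest eigenvalue, exploiting the fact that $D_v$ is the orthogonal projection onto the one-dimensional subspace $\R e_v$ and that $\R^{\bV}_{-v}$ is precisely its orthogonal complement. Throughout, write $M\equiv t\,\Pi^{1/2}Q\Pi^{-1/2}$, so that we must show $\lambda_{\min}(M+\Delta D_v)\to \lambda_{\min}(M\mid_{\R^{\bV}_{-v}}) = t\lambda_{\min}(Q_{-v})$ as $\Delta\nearrow +\infty$.

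First I would establish the easy upper bound. Since $D_v\succeq 0$, the map $\Delta\mapsto \lambda_{\min}(M+\Delta D_v)$ is non-decreasing. On the other hand, for every unit vector $x\in \R^{\bV}_{-v}$ one has $D_v x=0$, so by the min-max principle
\[
\lambda_{\min}(M+\Delta D_v)=\min_{\|x\|=1}\langle x,Mx\rangle+\Delta\,|x(v)|^2 \;\leq\; \min_{x\in\R^{\bV}_{-v},\,\|x\|=1}\langle x,Mx\rangle \;=\; t\,\lambda_{\min}(Q_{-v}),
\]
uniformly in $\Delta>0$. Hence $\limsup_{\Delta\to\infty}\lambda_{\min}(M+\Delta D_v)\leq t\lambda_{\min}(Q_{-v})$.

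Next I would establish the matching lower bound by a compactness argument on minimizing unit vectors. Let $x_\Delta$ be a unit eigenvector of $M+\Delta D_v$ associated with the smallest eigenvalue, so that
\[
\langle x_\Delta,Mx_\Delta\rangle + \Delta\,|x_\Delta(v)|^2 \;=\; \lambda_{\min}(M+\Delta D_v) \;\leq\; t\,\lambda_{\min}(Q_{-v}).
\]
Since $\langle x_\Delta,Mx_\Delta\rangle\geq \lambda_{\min}(M)$ is bounded below independently of $\Delta$, we conclude that $\Delta\,|x_\Delta(v)|^2$ is bounded, hence $|x_\Delta(v)|^2\to 0$ as $\Delta\to\infty$. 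Because the unit sphere in $\R^{\bV}$ is compact, any subsequential limit $x^*$ of $x_\Delta$ satisfies $\|x^*\|=1$ and $x^*(v)=0$, i.e.\ $x^*\in\R^{\bV}_{-v}$. Dropping the non-negative term $\Delta|x_\Delta(v)|^2$ and passing to the limit along such a subsequence gives
\[
\liminf_{\Delta\to\infty}\lambda_{\min}(M+\Delta D_v) \;\geq\; \langle x^*, Mx^*\rangle \;\geq\; \min_{x\in\R^{\bV}_{-v},\,\|x\|=1}\langle x,Mx\rangle \;=\; t\,\lambda_{\min}(Q_{-v}).
\]

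Combining the two bounds proves the Claim. The only genuinely delicate point is justifying that the minimizer cannot ``escape'' the subspace $\R^{\bV}_{-v}$ in the limit; this is handled by the boundedness of $\Delta|x_\Delta(v)|^2$, which forces the $v$-coordinate of the (sub)sequentially limiting minimizer to vanish. All other steps are routine applications of the min-max principle and compactness.
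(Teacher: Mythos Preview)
Your proof is correct. The upper bound is identical to the paper's; for the lower bound you take a genuinely different route. The paper establishes the quantitative matrix inequality
\[
A+\Delta D_v \;\succeq\; \sP_{-v}A\sP_{-v} - \frac{c\sP_{-v}}{\Delta} + \frac{\Delta}{2}\,D_v
\]
(with $A=t\Pi^{1/2}Q\Pi^{-1/2}$) via a Cauchy--Schwarz completion-of-the-square computation, and then reads off the minimum eigenvalue of the right-hand side using that $\R^{\bV}_{-v}$ is an invariant subspace. Your argument instead tracks the minimizing unit eigenvector $x_\Delta$ directly: the uniform upper bound forces $\Delta|x_\Delta(v)|^2$ to stay bounded, whence $x_\Delta(v)\to 0$, and compactness of the unit sphere pushes any subsequential limit into $\R^{\bV}_{-v}$. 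Your route is shorter and more conceptual; the paper's route is more explicit, yielding the quantitative rate $\lambda_{\min}(A+\Delta D_v)\geq t\lambda_{\min}(Q_{-v})-c/\Delta$, which your compactness argument does not provide (though nothing in the paper uses that rate). Both are entirely standard in the spectral theory of Schr\"odinger-type operators with large potential on a subspace.
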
 This
result is probably well-known; for instance, it is a weaker variant
of Lemma 3.1 in \cite{KempeEtAl_2local}. We will prove it below for
completeness, but first we deduce from it that:
$$\Prp{\pi}{\forall 0\leq s\leq t,\, X_s\neq h_s} \leq \exp\left(-t\min_{v\in \bV}\lambda_{\min}(Q_{-v})\right) = e^{-\frac{t}{\Exp{{\qstat}_v}{H_v}}} = \Prp{{\qstat}_v}{H_v>t}$$
via \propref{mineigenvalue}, where ${\qstat}_v$ is some
quasistationary distribution associated with $v$.

We now prove the Claim. Recall the definition of $\sP_{-v}$ in
\secref{quasistat} and notice that $D_v=I-\sP_{-v}$. This shows that
$D_aw=0$ for all $w\in\R^{\bV}_{-v}$ and therefore:
\begin{eqnarray}\nonumber \lambda_{\min}(t\Pi^{1/2}Q\Pi^{-1/2}+\Delta D_v)&=&\inf_{w\in\R^{\bV},\, |w|=1}w^\dag(t
\Pi^{1/2}Q\Pi^{-1/2}+\Delta D_v)\\ \nonumber &\leq&
\inf_{w\in\R^{\bV}_{-v},\, |w|=1}w^\dag(t
\Pi^{1/2}Q\Pi^{-1/2}+\Delta D_v)w \\ \nonumber &=&
\inf_{w\in\R^{\bV}_{-v},\, |w|=1}w^\dag(t \Pi^{1/2}Q\Pi^{-1/2})w\\
\nonumber \mbox{ (use $\sP_{-v}w=w$) } &=&
t\inf_{w\in\R^{\bV}_{-v},\, |w|=1}w^\dag(\sP_{-v}
\Pi^{1/2}Q\Pi^{-1/2}\sP_{-v})w\\ \label{eq:uppermeetingtime}&=&
t\lambda_{\min}(Q_{-a}).\end{eqnarray} To get an opposite
inequality, we set $A=t\Pi^{1/2}Q\Pi^{-1/2}$ for convenience. We
first show that there exists some $c>0$ such that for all large
enough $\Delta>0$,
\begin{equation}\label{eq:order}A+\Delta D_v \succeq
\sP_{-v}A\sP_{-v} - \frac{c\sP_{-v}}{\Delta} +
\frac{\Delta}{2}\,D_v,\end{equation} where for symmetric matrices
$B_1,B_2$ with the same size, $B_1\preceq B_2$ means that $B_2-B_1$
is positive semidefinite. To see this, we use $\sP_{-v}+D_v=I$
several times and notice that for any $x\in\R^{\bV}$,
\begin{eqnarray*}x^{\dag}(A+\Delta D_v)x  &=& x^\dag \sP_{-v}A\sP_{-v}x  + x^\dag D_vAD_vx+ 2x^\dag (\sP_{-v}AD_v)x \\ & & +\Delta (x^\dag
D_vx) \\ \mbox{(Cauchy-Schwartz)}&\geq & x^\dag \sP_{-v}A\sP_{-v}x +
x^\dag \frac{\Delta D_v}{2} x +
|D_vx|^2\left(\frac{\Delta}{2}-\|A\|_{\rm op}\right) \\ & & - 2\|A\|_{\rm op}\,|\sP_{-v}x|||D_vx|\\
\mbox{(assume $\Delta>4\|A\|_{\rm op}$)}&\geq &
x^\dag\left(\sP_{-v}A\sP_{-v} + \frac{\Delta D_v}{2}\right)x \\ & &
+
\left(\frac{\sqrt{\Delta}|D_vx|}{2} - \frac{2\|A\|_{\rm op}|\sP_{-v}x|}{\sqrt{\Delta}}\right)^2 - \frac{4\|A\|_{\rm op}|\sP_{-v}x|^2}{\Delta}\\
\mbox{(set $c\equiv 4\|A\|_{\rm op}$)}&\geq &
x^\dag\left(\sP_{-v}A\sP_{-v} - \frac{c\sP_{-v}}{\Delta} +
\frac{\Delta D_v}{2}\right)x.\end{eqnarray*} This proves
\eqnref{order}, which implies:
\begin{equation}\label{eq:order2}\lambda_{\min}(A+\Delta D_v)\geq
\lambda_{\min}\left(\sP_{-v}A\sP_{-v} - \frac{c\sP_{-v}}{\Delta} +
\frac{\Delta}{2}\,D_v,\right).\end{equation} Notice that the matrix
in the RHS has $\R^{\bV}_{-v}$ as an invarant subspace, which
implies that all of its eigenvectors lie in $\R^{\bV}_{-v}$ or in
its orthogonal complement. It is easy to see that the all vectors in
the latter space are eigenvectors with eigenvalue $\Delta/2$;
therefore, for all large enough $\Delta$ the {\em minimal}
eigenvalue corresponds to a vector in $\R^{\bV}_{-v}$. We deduce
that for all large $\Delta>0$,
\begin{multline*}\lambda_{\min}\left(\sP_{-v}A\sP_{-v} -
\frac{c\sP_{-v}}{\Delta} + \frac{\Delta}{4}\,D_v,\right)
\\=\min_{w\in\R^{\bV}_{-v},|w|=1}w^{\dag}\left(\sP_{-v}A\sP_{-v} -
\frac{c\sP_{-v}}{\Delta}\right)w = t\lambda_{\min}(Q_{-v}) -
\frac{2c}{\Delta}\end{multline*} because $w^{\dag}\sP_{-v}A\sP_{-v}w
= tw^\dag Q_{-v}w$ for all $w$ as above. Together with
\eqnref{uppermeetingtime} and \eqnref{order2}, this shows that:
$$\mbox{For large enough $\Delta>0$, }t\lambda_{\min}(Q_{-v}) - \frac{c}{\Delta}\leq \lambda_{\min}(t\Pi^{1/2}Q\Pi^{-1/2}+\Delta
D_v)\leq t\lambda_{\min}(Q_{-v}),$$ and the Claim follows when we
let $\Delta\nearrow +\infty$.\end{proof}

\ignore{\appendix

\section{Appendix: proof sketch for \propref{domination}}\label{sec:proof_domination}

We assume familiarity with the basic ingredients of {\em graphical
constructions}, since our coupling is based on a graphical construction of $(X_t(a))_{t,a}$, $(Y_t(a))_{t,a}$ and
$(Y_t^\sA(a))_{t,a}$. Fix an initial state $$x^{(n)}=(x(1),x(2),\dots,x(n))\in\bV^n.$$
Set:
$$\rho\equiv \sum_{(x,y)\in\bV^2\,:\,x\neq y}q(x,y).$$
Let $\sP=\{\eta_1<\eta_2<\eta_3<\dots\}$ be a Poisson process on $\sP$ with rate $n\rho$. Also let $\{i_r\}_{r\in\N}$ and $\{(v_r,w_r)\}_{r\in\N}$ be i.i.d. sequences which are independent of each other and of $\sP$ with:
$$\forall i\in[n],\, \Pr{i_r=i} = \frac{1}{n};$$
$$\forall (v,w)\in\bV^2\mbox{ with }v\neq w,\, \Pr{(v_1,w_1)=(v,w)} = \frac{q(x,y)}{\rho}.$$

We first build the independent random walks process. Set $\eta_0=0$ We define $X_{t}(a)=x(a)$ for $a\in [n]$ and $t\in [\eta_0,\eta_1)$. Assume inductively that $X_{t}(a)$ has been defined for all $a\in[n]$ and $t\in [0,\eta_r)$ for some integer $r>0$. For $t\in [\eta_{r},\eta_{r+1})$, we set:
$$X_t(a)\equiv \left\{\begin{array}{ll}w_r, &\mbox{if }a\mbox{ is the $i_r$-th largest element of the set}\\ & \{b\in[n]\,:\,X_{\eta_{r-1}}(b)=v_r\}\\ X_{\eta_{r-1}}(a)&\mbox{otherwise.}\end{array}\right.$$
We note that the first condition can only be satisfied if there are $\geq i_r$ elements in the set.

\begin{claim}[Proof omitted] The process
$$(X_t(a)\,:\,a\in [n])_{t\geq 0}$$
is a realization of the independent random walks process with initial state $x^{(n)}$.\end{claim}

We now construct the $Y_t(a)$ process. For $t\in [\eta_0,\eta_1)$ we set:
\begin{equation}\label{eq:updateinitial}Y_t(a)\equiv \left\{\begin{array}{ll}x(a), &\mbox{if }a\mbox{ is the smallest largest element in the set}\\ & \{b\in[n]\,:\,x(b)=x(a)\}\\ \partial &\mbox{otherwise.}\end{array}\right.\end{equation}

For $r>1$ and $t\in [\eta_r,\eta_{r+1})$, we set (inductively):
\begin{equation}\label{eq:updatesubseq}Y_t(a)\equiv \left\{\begin{array}{ll}X_t(a), &\mbox{if }Y_{\eta_{r-1}}(a)\neq \partial \mbox{ and } a\mbox{ is the smallest largest element in the set}\\ & \{b\in[n]\,:\,X_{\eta_r}(b)=X_{\eta_r}(a)\}\\ \partial &\mbox{otherwise.}\end{array}\right.\end{equation}

\begin{claim}[Proof omitted] Given the construction of $(X_t(a))_{t,a}$ above, the process
$$(Y_t(a)\,:\,a\in [n])_{t\geq 0}$$
corresponds exactly to the process in \secref{Yprocess}.\end{claim}

Finally, we construct $Y^\sA_t(a)$ by replacing the update rules \eqnref{updateinitial} and \eqnref{updatesubseq} above. For $t\in [\eta_0,\eta_1)$ we set:
\begin{equation}\label{eq:updateinitial2}Y^\sA_t(a)\equiv \left\{\begin{array}{ll}x(a), &\mbox{if there is no $b<a$ with}\\ & x(b)=x(a)\mbox{ and }(b,a)\in\sA_0.\\ \partial &\mbox{otherwise.}\end{array}\right.\end{equation}

For $r>1$ and $t\in [\eta_r,\eta_{r+1})$, we set (inductively):
\begin{equation}\label{eq:updatesubseq}Y_t(a)\equiv \left\{\begin{array}{ll}X_t(a), &\mbox{if }Y_{\eta_{r-1}}(a)\neq \partial \mbox{ and } a\mbox{ is the smallest largest element in the set}\\ & \{b\in[n]\,:\,X_{\eta_r}(b)=X_{\eta_r}(a)\}\\ \partial &\mbox{otherwise.}\end{array}\right.\end{equation}}

\section{Final remarks}

\begin{itemize}
\item Let ${\rm T}_{\rm meet}^Q$ denote the maximum expected meeting time of two independent realizations of $Q$. In light of the discussion in the Introduction, it would be natural to expect that $\Ex{C}\leq K_2\,{\rm T}_{\rm meet}^Q$ for some universal $K_2>0$ and all $Q$. Is this actually true? A more modest question is whether the constants in the two Theorems can be improved.
\item The Meeting Time Lemma (\lemref{meetingtime}) can be used in the study of a cat-and-mouse game proposed in \cite[Chapter 4, page 17]{AldousFill_RWBook}. In this game a cat moves according to a reversible Markov chain $Q$. A mouse chooses a trajectory $(h_s)_{s\geq 0}$ for itself and an initial distribution for the cat. Aldous and Fill asked if staying put at some carefully chosen state gives an optimal strategy for the mouse in terms of maximizing $\Ex{M}$, where $M$ is the meeting time of cat and mouse. One can use \lemref{meetingtime} to prove that if $\Tmix^Q\ll \max_{v}\Exp{\pi}{H_v}$ (a natural condition in many examples), then the strategy where the mouse stays at $v$ and chooses $\qstat_v$ as the initial distribution nearly maximizes $\Pr{M>t}$ simultaneously for all $t\geq 0$. We expect to comment on this and related results in a upcoming note.
\end{itemize}

\bibliography{bibfile_rimfo}
\bibliographystyle{plain}
\end{document}